\documentclass[a4paper, 12pt, twoside, notitlepage]{amsart}
\usepackage{amsmath,amscd}
\usepackage{amssymb}
\usepackage{amsthm}
\usepackage{comment}
\usepackage{graphicx, xcolor}
\usepackage{lmodern}
\usepackage{microtype}
\usepackage{nowidow}

\usepackage{mathrsfs}
\usepackage[ocgcolorlinks, linkcolor=blue]{hyperref}


\usepackage{bm}
\usepackage{bbm}
\usepackage{url}

\usepackage[utf8]{inputenc}
\usepackage{mathtools,amssymb}
\usepackage{esint}
\usepackage{tikz}
\usepackage{dsfont}
\usepackage{relsize}
\usepackage{url}
\urlstyle{same}
\usepackage{xcolor}
\usepackage{graphicx}
\usepackage{mathrsfs}
\usepackage[shortlabels]{enumitem}
\usepackage{lineno}
\usepackage{amsmath}
\usepackage{enumitem}
\usepackage{amsthm} 
\usepackage{verbatim}
\usepackage{dsfont}
\usepackage{faktor}
\numberwithin{equation}{section}

\allowdisplaybreaks

\mathtoolsset{showonlyrefs}

\graphicspath{{images/}}

\newtheorem{theorem}{Theorem}[section]
\newtheorem{lemma}[theorem]{Lemma}

\newtheorem{proposition}[theorem]{Proposition}
\newtheorem{question}{Question}
\newtheorem{remark}[theorem]{Remark}

\title[Nonlocal diffuse optical tomography]{Inverse problem for a nonlocal diffuse optical tomography equation}

\author[P. Zimmermann]{Philipp Zimmermann}
\address{Department of Mathematics, ETH Zurich, Z\"urich, Switzerland}
\email{philipp.zimmermann@math.ethz.ch}

\newcommand{\R}{{\mathbb R}}

\newcommand{\N}{{\mathbb N}}



\newcommand{\schwartz}{\mathscr{S}}

\newcommand{\tempered}{\mathscr{S}^{\prime}}

\newcommand{\fourier}{\mathcal{F}}
\newcommand{\ifourier}{\mathcal{F}^{-1}}
\newcommand{\vev}[1]{\left\langle#1\right\rangle}

\newcommand{\distr}{\mathscr{D}^{\prime}}
\DeclarePairedDelimiter{\abs}{\lvert}{\rvert}
\DeclarePairedDelimiter{\norm}{\|}{\|}
\newcommand{\ip}[2]{\left\langle #1,#2 \right\rangle}
\DeclareMathOperator{\supp}{supp} 
\DeclareMathOperator{\dist}{dist} 


\begin{document}

	\maketitle
	\begin{abstract}
	In this article a nonlocal analogue of an inverse problem in diffuse optical tomography is considered. We show that whenever one has given two pairs of diffusion and absorption coefficients $(\gamma_j,q_j)$, $j=1,2$, such that there holds $q_1=q_2$ in the measurement set $W$ and they generate the same DN data, then they are necessarily equal in $\R^n$ and $\Omega$, respectively. Additionally, we show that the condition $q_1|_W=q_2|_W$ is optimal in the sense that without this restriction one can construct two distinct pairs $(\gamma_j,q_j)$, $j=1,2$ generating the same DN data.

		\medskip
		
		\noindent{\bf Keywords.} Fractional Laplacian,  Calderón problem, exterior determination, unique continuation principle, optical tomography.
		
		\noindent{\bf Mathematics Subject Classification (2020)}: Primary 35R30; secondary 26A33, 42B37

	\end{abstract}

	\tableofcontents

	\section{Introduction}
    \label{sec: introduction}

    In recent years many different nonlocal inverse problems have been studied. The prototypical example is the inverse problem for the fractional Schr\"odinger operator $(-\Delta)^s+q$, where the measurements are encoded in the (exterior) Dirichlet to Neumann (DN) map $f\mapsto\Lambda_qf=(-\Delta)^su_f|_{\Omega_e}$. Here $\Omega_e=\R^n\setminus\overline{\Omega}$ is the exterior of a smoothly bounded domain $\Omega\subset\R^n$ and $0<s<1$. This problem, nowadays called \emph{fractional Calder\'on problem}, was first considered for $q\in L^{\infty}(\Omega)$ in \cite{GSU20} and initiated many of the later developments. The classical proof of the (interior) uniqueness for the fractional Calder\'on problem, that is of the assertion that $\Lambda_{q_1}=\Lambda_{q_2}$ implies $q_1=q_2$ in $\Omega$, relies on the Alessandrini identity, the unique continuation principle (UCP) of the fractional Laplacian and the Runge approximation. Following a similar approach, in the works \cite{bhattacharyya2021inverse,CMR20,CMRU20,GLX,CL2019determining,CLL2017simultaneously,cekic2020calderon,feizmohammadi2021fractional,harrach2017nonlocal-monotonicity,harrach2020monotonicity,GRSU18,GU2021calder,ghosh2021non,lin2020monotonicity,LL2020inverse,LL2022inverse,LLR2019calder,LLU2022calder,KLW2021calder,RS17,ruland2018exponential,RZ2022unboundedFracCald}, it has been shown that one can uniquely recover lower order, local perturbations of many different nonlocal models.
    
    On the other hand, the author together with different collaborators considered in \cite{RZ2022unboundedFracCald,counterexamples,RGZ2022GlobalUniqueness,RZ2022LowReg,StabilityFracCond} the \emph{inverse fractional conductivity problem}, which has been first studied in \cite{covi}. The main objective in this problem is to uniquely determine the conductivity $\gamma\colon\R^n\to\R_+$ from the DN map $f\mapsto \Lambda_{\gamma}f$ related to the Dirichlet problem
    \begin{equation}
    \label{eq: frac cond eq intro}
    \begin{split}
            L^s_\gamma u &= 0 \enspace\text{in}\enspace\Omega,\\
            u &= f \enspace\text{in}\enspace\Omega_e.
    \end{split}
    \end{equation}
    Here $L_{\gamma}^s$ denotes the \emph{fractional conductivity operator}, which can be strongly defined via
    \begin{equation}
    \label{eq: frac cond operator intro}
        L_{\gamma}^su(x)=C_{n,s}\gamma^{1/2}(x)\,\text{p.v.}\int_{\R^n}\gamma^{1/2}(y)\frac{u(x)-u(y)}{|x-y|^{n+2s}}\,dy.
    \end{equation}
    In this formula, $C_{n,s}>0$ is some positive constant and $\text{p.v.}$ denotes the Cauchy principal value. More concretely, in the aforementioned articles it has been shown that the conductivity $\gamma$ with background deviation $m_{\gamma}=\gamma^{1/2}-1$ in $H^{s,n/s}(\R^n)$ can be uniquely recovered from the DN data, in the measurement set the conductivity can be explicitly reconstructed with a Lipschitz modulus of continuity and on smooth, bounded domains the full data inverse fractional conductivity problem is under suitable a priori assumptions logarithmically stable.
    
    Let us note that as $s$ converges to 1, the fractional conductivity operator $L_{\gamma}^s$ becomes the \emph{conductivity operator} $L_{\gamma}u=-\text{div}(\gamma\nabla u)$. Hence the above inverse problem can be considered as a nonlocal analogue of the classical \emph{Calder\'on problem} \cite{calderon}, that is, the problem of uniquely recovering the conductivity $\gamma\colon\overline{\Omega}\to\R_+$ from the DN map $f\mapsto\Lambda_{\gamma}f=\gamma\partial_{\nu}u_f|_{\partial\Omega}$, where $u_f\in H^1(\Omega)$ is the unique solution to the Dirichlet problem of the \emph{conductivity equation}
    \begin{equation}
    \label{calderon prob intro}
    \begin{split}
            L_\gamma u &= 0 \,\enspace\text{in}\enspace\Omega,\\
            u &= f \enspace\text{on}\enspace\partial\Omega
    \end{split}
    \end{equation}
    and $\nu$ denotes the outward pointing unit normal vector of the smoothly bounded domain $\Omega\subset\R^n$. The mathematical investigation of the inverse conductivity problem dates at least back to the work \cite{Langer33-calderon-halfspace} of Langer. Many uniqueness proofs of the Calderón problem are based on the \emph{Liouville reduction}, which allows to reduce this inverse problem for a variable coefficient operator to the inverse problem for the Schrödinger equation $-\Delta+q$, on the construction of complex geometric optics (CGO) solutions \cite{SU87}, and on a boundary determination result \cite{KV84}. The first uniqueness proof for the inverse fractional conductivity problem also relied on a reduction of the problem via a \emph{fractional Liouville reduction} to the inverse problem for the fractional Schr\"odinger equation and the boundary determination of Kohn and Vogelius was replaced by an exterior determination result (cf.~\cite{RGZ2022GlobalUniqueness} for the case $m_{\gamma}\in H^{2s,n/2s}(\R^n)$ and \cite{RZ2022LowReg} for $m_{\gamma}\in H^{s,n/s}(\R^n)$). Since the UCP and the Runge approximation are much stronger for nonlocal operators than for local ones, which in turn relies on the fact solutions to $(-\Delta)^s+q$ are much less rigid than the ones to the local Schr\"odinger equation $-\Delta+q$, the uniqueness for the nonlocal Schr\"odinger equation can be established without the construction of CGO solutions. In fact, it is an open problem whether these exist for the fractional Schr\"odinger equation. 
    
    \subsection{The optical tomography equation}

    Recently, in the articles \cite{OptTomHarrach,SimDetDiffAbs}, it has been investigated whether the diffusion $\gamma$ and the absorption coefficient $q$ in the \emph{optical tomography equation}
    \begin{equation}
    \label{eq: opt tom prob}
    \begin{split}
            L_{\gamma}u +qu &= F \,\enspace\text{in}\enspace\Omega
    \end{split}
    \end{equation}
    can be uniquely recovered from the partial Cauchy data $(u|_{\Gamma},\gamma\partial_{\nu}u|_{\Gamma})$, where $\Omega\subset\R^n$ is a bounded domain and $\Gamma \subset\partial\Omega$ is an arbitrarily small region of the boundary. This problem arises in the (stationary) diffusion based optical tomography and therefore we refer to \eqref{eq: opt tom prob} as the optical tomography equation. Generally speaking, in optical tomography one uses low energy visible or near infrared light (wavelength $\lambda\sim 700-1000$nm) to test highly scattering media (as a tissue sample of a human body) and wants to reconstruct the optical properties within the sample by intensity measurements on the boundary. In a possible experimental situation, light is sent via optical fibres to the surface of the medium under investigation and the transilluminated light is measured by some detecting fibres. 
    
    The starting point to describe the radiation propagation in highly scattering media is the radiative transfer equation (Boltzmann equation)
    \begin{equation}
    \label{eq: radiation equation}
        \begin{split}
            &\partial_tI(x,t,v)+v\cdot\nabla I(x,t,v)+(\mu_a+\mu_s)I(x,t,v)\\
            &\quad=\mu_s\int_{S^{n-1}}f(v,v')I(x,t,v')\,d\sigma(v')+G(x,t,v),
        \end{split}
    \end{equation}
    which describes the change of the radiance $I=I(x,t,v)$ at spacetime point $(x,t)$ into the direction $v\in S^{n-1}=\{x\,;\, |x|=1\}$. Here, we set $c=1$ (speed of light) and the other quantities have the following physical meaning:\\
    
    \begin{tabular}{ c l }
 $\mu_a$ & absorption coefficient \\ 
 $\mu_s$ & scattering coefficient  \\  
 $f(v,v')$ & scattering phase function - probability that the wave \\
 & incident in direction $v'$ is scattered into direction $v$ \\
 $G$ & isotropic source
\end{tabular}\\
    
    In the diffusion approximation, as explained in detail in \cite{OptTomArridge} or \cite[Appendix]{schweiger1995finite}, one gets equation \eqref{eq: opt tom prob}, where the quantities are related as follows:\\
    
     \begin{tabular}{ c l }
 $u$ & photon density - $u(x,t)=\int_{S^{n-1}}I(x,t,v')\,d\sigma(v')$ \\ 
 $\gamma$ & diffusion coefficient of the medium -  $\gamma=[3(\mu_a+\mu'_s)]^{-1}$ \\
 & with $\mu'_s$ being the reduced scattering coefficient \\  
 $q$ & absorption coefficient $\mu_a$  \\
 $F$ & isotropic source
\end{tabular}\\

and
 $-\gamma\partial_{\nu}u|_{\Gamma}$ describes the normal photon current (or exitance) across $\Gamma\subset\partial\Omega$. Let us remark that in the diffusion approximation one assumes $\mu_a\ll \mu_s$ and that the light propagation is weakly anisotropic, which is incoorporated in $\mu'_s$. For further discussion on this classical model, we refer to the above cited articles and \cite{gibson2005recent}.

    \subsubsection{Non-uniqueness in diffusion based optical tomography}
    \label{subsubsec: nonunique OT}
    
    In \cite{arridge1998nonuniqueness}, Arridge and Lionheart constructed counterexamples to uniqueness for the inverse problem of the diffusion based optical tomography equation \eqref{eq: opt tom prob}. They consider a smoothly bounded domain $\Omega\subset\R^n$ containing a compact subdomain $\Omega_0\Subset \Omega$ such that the isotropic source is supported in $\Omega_1\vcentcolon = \Omega\setminus\overline{\Omega}_0$. Then they observe that if the diffusion coefficient $\gamma$ is sufficiently regular, the optical tomography equation \eqref{eq: opt tom prob} is reduced via the Liouville reduction to 
    \begin{equation}
    \label{eq: reduced opt tom prob}
    \begin{split}
            -\Delta v +\eta v &= \frac{F}{\gamma^{1/2}} \,\enspace\text{in}\enspace\Omega
    \end{split}
    \quad\text{with}\quad \eta\vcentcolon = \frac{\Delta\gamma^{1/2}}{\gamma^{1/2}}+\frac{q}{\gamma},
    \end{equation}
    where $v=\gamma^{1/2}u$. Now, one can change the coefficients $(\gamma,q)$ to
    \begin{equation}
    \label{eq: equivalent coeff}
        \widetilde{\gamma}\vcentcolon = \gamma+\gamma_0,\quad \widetilde{q}\vcentcolon = q+q_0\quad\text{and}\quad\widetilde{\eta}\vcentcolon=\frac{\Delta\widetilde{\gamma}^{1/2}}{\widetilde{\gamma}^{1/2}}+\frac{\widetilde{q}}{\widetilde{\gamma}},
    \end{equation}
    where these new parameters satisfy
    \begin{enumerate}[(i)]
        \item\label{cond 1 non} $\gamma_0\geq 0$ with $\gamma_0|_{\Omega_1}=0$
        \item\label{cond 2 non} and $\widetilde{\eta}=\eta$ in $\Omega$.
    \end{enumerate}
    The latter condition means nothing else than
    \begin{equation}
    \label{eq: effective potentials}
        \frac{\Delta(\gamma+\gamma_0)^{1/2}}{(\gamma+\gamma_0)^{1/2}}+\frac{q+q_0}{\gamma+\gamma_0}=\frac{\Delta\gamma^{1/2}}{\gamma^{1/2}}+\frac{q}{\gamma}\quad\text{in}\quad\Omega.
    \end{equation}
    Hence, if we have given $\gamma_0$, then this relation can always be used to calculate $q_0$ by
    \begin{equation}
    \label{eq: calculation of potential perturb}
        q_0=(\gamma+\gamma_0)\left(\frac{\Delta\gamma^{1/2}}{\gamma^{1/2}}-\frac{\Delta(\gamma+\gamma_0)^{1/2}}{(\gamma+\gamma_0)^{1/2}}+\frac{q}{\gamma}\right)-q.
    \end{equation}
    As the transformations \eqref{eq: equivalent coeff} under the conditions \ref{cond 1 non}, \ref{cond 2 non} leave the Dirichlet and Neumann data of solutions to \eqref{eq: reduced opt tom prob} invariant, this leads to the desired counterexamples.
    
    \subsubsection{Uniqueness in diffusion based optical tomography}
    
    Harrach considered in \cite{OptTomHarrach,SimDetDiffAbs} the discrepancy between the counterexamples of the last section and the positive experimental results in \cite[Section~3.4.3]{gibson2005recent} of recovering $\gamma$ and $q$ simultaneously in more detail. In these works it is established that uniqueness in the inverse problem for the optical tomography equation is obtained, when the diffusion $\gamma$ is piecewise constant and the absorption coefficient piecewise analytic. The main tool to obtain this result is the technique of localized potentials (see~\cite{LocPotential}), which are solutions of \eqref{eq: opt tom prob} that are large on a particular subset but otherwise small. The use of special singular solutions to prove uniqueness in inverse problems for (local or nonlocal) PDEs became in recent years a popular technique (see for example \cite{KV84,KV85,Alessandrini-singular,Nachman1996GlobalUniqueness,SU87} for local PDEs and \cite{RGZ2022GlobalUniqueness,RZ2022LowReg,LRZ22,KLZ22FracpLap} for nonlocal PDEs).
    
    \subsection{Nonlocal optical tomography equation and main results}
   
   The main goal of this article is to study a nonlocal variant of the previously introduced inverse problem for the optical tomography equation. More concretely, we consider the \emph{nonlocal optical tomography equation}
   \begin{equation}
   \label{eq: nonlocal tomography equation intro}
       L_{\gamma}^su+qu=0\quad\text{in}\quad\Omega,
   \end{equation}
   where $\Omega\subset\R^n$ is a domain bounded in one direction, $0<s<1$, $\gamma\colon \R^n\to\R_+$ is a diffusion coefficient, $q\colon\R^n\to\R$ an absorption coefficient (aka potential) and $L_{\gamma}^s$ the variable coefficient nonlocal operator defined in \eqref{eq: frac cond operator intro}. Then we ask:
  
   \begin{question}
   \label{question uniqueness}
     Let $W_1,W_2\subset \Omega_e$ be two measurement sets. Under what conditions does the DN map $C_c^{\infty}(W_1)\ni f\mapsto \Lambda_{\gamma,q}f|_{W_2}$ related to \eqref{eq: nonlocal tomography equation intro} uniquely determine the coefficients $\gamma$ and $q$?
   \end{question}
   
    By \cite[Theorem~1.8]{RZ2022LowReg}, we know that the measurement sets need to satisfy $W_1\cap W_2\neq\emptyset$ and hence, we consider the setup illustrated in Figure~\ref{figure 1}.
 \begin{figure}[!ht]
    \centering
    \begin{tikzpicture}
    \filldraw[color=blue!50, fill=blue!5, xshift=11cm, yshift=1.5cm] (3,-2.5) ellipse (0.9 and 0.9);    
    \node[xshift=11cm, yshift=1.5cm] at (3,-2.5) {$\raisebox{-.35\baselineskip}{\ensuremath{\Lambda_{\gamma,q}f|_{W_2}}}$};
    \filldraw[color=green!50, fill=green!5, xshift=11cm, yshift=0.1cm, opacity=0.8] (3,-2.5) ellipse (1.3 and 0.75);    
    \node[xshift=11cm, yshift=0.1cm] at (3,-2.5) {$\raisebox{-.35\baselineskip}{\ensuremath{f\in C_c^{\infty}(W_1)}}$};
    \filldraw [color=orange!80, fill = orange!5, xshift=8cm, yshift=-2cm,opacity=0.8] plot [smooth cycle] coordinates {(-1,0.9) (3,1.5) (4.5,-0.5) (3,-1) (-1.5,-0.25)};
    \node[xshift=3cm] at (6.3,-1.75) {$\raisebox{-.35\baselineskip}{\ensuremath{L_{\gamma}^su+qu=0\enspace\text{in}\enspace\Omega}}$};
\end{tikzpicture}
\caption{\begin{small} Here, $\Omega$ represents the scattering medium, $\gamma$, $q$ the diffusion and absorption coefficient, $f$ a light pulse in $W_1$ and $\Lambda_{\gamma}f|_{W_2}$ the nonlocal photon current in $W_2$.\end{small}}
\label{figure 1}
\end{figure}
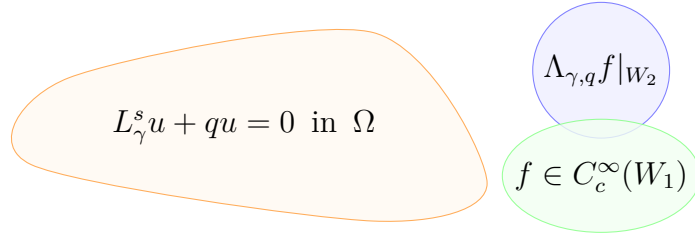
   
   Moreover, motivated by the counterexamples in Section~ \ref{subsubsec: nonunique OT}, we expect that the potentials $q_1,q_2$ should coincide in the measurement sets $W_1,W_2\subset\Omega_e$. Indeed, under slightly weaker assumptions we establish that the DN map $\Lambda_{\gamma,q}$ uniquely determines the coefficients $\gamma$ and $q$. More precisely, we will prove in Section~\ref{sec: inverse problem} the following result:
   
   \begin{theorem}[Global uniqueness]
    \label{main thm}
        Let $0 < s < \min(1,n/2)$, suppose $\Omega\subset \R^n$ is a domain bounded in one direction and let $W_1,W_2\subset \Omega_e$ be two non-disjoint measurement sets. Assume that the diffusions $\gamma_1, \gamma_2\in L^{\infty}(\R^n)$ with background deviations $m_{\gamma_1},m_{\gamma_2}\in H^{s,n/s}(\R^n)$ and potentials $q_1,q_2\in \distr(\R^n)$ satisfy
        \begin{enumerate}[(i)]
            \item\label{uniform ellipticity diffusions} $\gamma_1,\gamma_2$ are uniformly elliptic with lower bound $\gamma_0>0$,
            \item\label{continuity diffusions} $\gamma_1, \gamma_2$ are a.e. continuous in $W_1\cap W_2$,
            \item\label{integrability potentials} $q_1,q_2\in M_{\gamma_0/\delta_0,+}(H^s\to H^{-s})\cap L^p_{loc}(W_1\cap W_2)$ for some $\frac{n}{2s}< p\leq \infty$
            \item\label{equal potentials in measurement sets} and $q_1|_{W_1\cap W_2}=q_2|_{W_1\cap W_2}$.
        \end{enumerate}
        If $\Lambda_{\gamma_1,q_1}f|_{W_2}=\Lambda_{\gamma_2,q_2}f|_{W_2}$ for all $f\in C_c^{\infty}(W_1)$, then there holds $\gamma_1=\gamma_2$ in $\R^n$ and $q_1=q_2$ in $\Omega$.
    \end{theorem}
    
    \begin{remark}
    \label{remark: def of constant}
    In the above theorem and throughout this article, we set $\delta_0\vcentcolon =2\max(1,C_{opt})$, where $C_{opt}=C_{opt}(n,s,\Omega)>0$ is the optimal fractional Poincar\'e constant defined via
    \begin{equation}
    \label{eq: optimal fractional Poincare constant}
        C_{opt}^{-1}=\inf_{0\neq u\in\widetilde{H}^s(\Omega)}\frac{[u]^2_{H^s(\R^n)}}{\|u\|_{L^2(\R^n)}^2}<\infty
    \end{equation}
    (see~Theorem~\ref{thm: Poinc Unbounded Doms}).
\end{remark}

    \begin{remark}
     Let us note that when we change $q$ away from $\Omega$ and the measurement sets $W_1,W_2$, then the DN data $C_c^{\infty}(W_1)\ni f\mapsto \Lambda_{\gamma,q}f|_{W_2}$ remain the same. Therefore, in the above theorem we have only uniqueness for the potential in $\Omega$.
   \end{remark}
    
    Next, let us discuss the assumption that the potentials $q_1,q_2$ coincide in $W=W_1\cap W_2$, where $W_1,W_2\subset\Omega_e$ are two non-disjoint measurement sets. First of all, one can observe that the proofs given in Section~\ref{subsec: exterior reconstruction} and \ref{subsec: determination of diffusion coeff} still work under the seemingly weaker assumption $W\cap \text{int}(\{q_1=q_2\})\neq\emptyset$. Hence, one can again conclude that $\gamma_1=\gamma_2$ in $\R^n$. Now, the UCP of the fractional conductivity operator $L_{\gamma}^s$ (see~Theorem~\ref{thm: uniqueness q}) and \cite[Corollary~2.7]{RZ2022unboundedFracCald} show that $q_1=q_2$ in $W$. Therefore, if the DN maps coincide then the assumption $W\cap \text{int}(\{q_1=q_2\})\neq\emptyset$ is equally strong as $q_1=q_2$ in $W$. This leads us to the following question:
     
     \begin{question}
   \label{question non-uniqueness}
        For a measurement set $W\subset\Omega_e$, can one find two distinct pairs of diffusion and absorption coefficients $(\gamma_1,q_1),(\gamma_2,q_2)$ satisfying the conditions \ref{uniform ellipticity diffusions}-\ref{integrability potentials} in Theorem~\ref{main thm} that generate the same DN data, i.e. $\Lambda_{\gamma_1,q_1}f|_{W}=\Lambda_{\gamma_2,q_2}f|_{W}$ for all $f\in C_c^{\infty}(W)$, but $q_1\not\equiv q_2$ in $W$?
   \end{question}

   We establish the following result:
   
   \begin{theorem}[Non-uniqueness]
   \label{thm: non uniqueness}
        Let $0 < s < \min(1,n/2)$, suppose $\Omega\subset \R^n$ is a domain bounded in one direction and let $W\subset \Omega$ be a measurement set. Then there exist two different pairs $(\gamma_1,q_1)$ and $(\gamma_2,q_2)$ satisfying $\gamma_1,\gamma_2\in L^{\infty}(\R^n)$, $m_{\gamma_1},m_{\gamma_1}\in H^{s,n/s}(\R^n)$, \ref{uniform ellipticity diffusions}--\ref{integrability potentials} of Theorem~\ref{main thm} and $\left.\Lambda_{\gamma_1,q_1}f\right|_{W}=\left.\Lambda_{\gamma_2,q_2}f\right|_{W}$ for all $f\in C_c^{\infty}(W)$, but there holds $q_1(x)\neq q_2(x)$ for all $x\in W$.
   \end{theorem}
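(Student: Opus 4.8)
The plan is to mimic the Arridge--Lionheart non-uniqueness construction of Section~\ref{subsubsec: nonunique OT} in the present nonlocal setting, using the fractional Liouville reduction in place of the classical one. First I would record the fractional Liouville reduction for the nonlocal optical tomography equation \eqref{eq: nonlocal tomography equation intro} (the natural extension of the one used in \cite{RGZ2022GlobalUniqueness,RZ2022LowReg} for the fractional conductivity equation): $u$ solves $L_{\gamma}^s u + qu = 0$ in $\Omega$ if and only if $v := \gamma^{1/2}u$ solves the fractional Schr\"odinger equation $(-\Delta)^s v + q_{\gamma,q}\,v = 0$ in $\Omega$, where the effective potential is $q_{\gamma,q} := q_{\gamma} + q/\gamma$ and $q_{\gamma} := -(-\Delta)^s m_{\gamma}/\gamma^{1/2}$ is the potential produced by the reduction (the choice of sign here is immaterial for what follows); moreover the DN maps are conjugate, $\Lambda_{\gamma,q}f = \gamma^{1/2}|_{\Omega_e}\,\Lambda_{q_{\gamma,q}}(\gamma^{1/2}f)$ on $\Omega_e$, with $\Lambda_{q_{\gamma,q}}$ the fractional Schr\"odinger DN map. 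The upshot is that whenever two admissible pairs $(\gamma_1,q_1),(\gamma_2,q_2)$ have the same effective potential, $q_{\gamma_1,q_1}=q_{\gamma_2,q_2}$, and satisfy $\gamma_1=\gamma_2\equiv 1$ on $\Omega_e$, then $\Lambda_{\gamma_1,q_1}f=\Lambda_{q_{\gamma_1,q_1}}f=\Lambda_{\gamma_2,q_2}f$ on $\Omega_e$ for every $f\in C_c^\infty(W)$, hence in particular on $W$. What is genuinely new compared with the local case is that the ``gauge freedom'' $(\gamma,q)\mapsto q_{\gamma,q}$ is now governed by the \emph{nonlocal} operator $(-\Delta)^s$, and this is exactly what will force $q_1\neq q_2$ everywhere on $W$, not merely somewhere outside the measurement set.

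Next I would exhibit the pair explicitly. Fix $\mu\in C_c^\infty(\Omega)$ with $\mu\geq 0$, $\mu\not\equiv 0$, let $\eps>0$ be small, and set
\begin{equation*}
    \gamma_1 := (1+\eps\mu)^2,\quad q_1 := 0,\qquad \gamma_2 := 1,\quad q_2 := q_{\gamma_1} = -\frac{\eps\,(-\Delta)^s\mu}{1+\eps\mu}.
\end{equation*}
Then $m_{\gamma_1}=\eps\mu\in C_c^\infty(\Omega)$, $m_{\gamma_2}=0$, and $\gamma_1=\gamma_2\equiv 1$ on $\Omega_e$. The effective potentials coincide,
\begin{equation*}
    q_{\gamma_1,q_1} = q_{\gamma_1} + 0/\gamma_1 = q_{\gamma_1},\qquad q_{\gamma_2,q_2} = 0 + q_{\gamma_1}/1 = q_{\gamma_1},
\end{equation*}
so via the fractional Liouville reduction both pairs reduce to the \emph{same} fractional Schr\"odinger equation $(-\Delta)^s v + q_{\gamma_1}v = 0$ in $\Omega$; since $\gamma_j^{1/2}\equiv 1$ on $\Omega_e$ and $\gamma_j^{1/2}f=f$ for $f\in C_c^\infty(W)$ (as $m_{\gamma_j}$ vanishes on $W$), the conjugation formula gives $\Lambda_{\gamma_1,q_1}f=\Lambda_{q_{\gamma_1}}f=\Lambda_{\gamma_2,q_2}f$ for all $f\in C_c^\infty(W)$. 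Equivalently, the pure fractional conductivity equation with conductivity $\gamma_1$ and the pure fractional Schr\"odinger equation with potential $q_{\gamma_1}$ are indistinguishable through the exterior DN data, because $\gamma_1\equiv 1$ outside $\Omega$.

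Then I would verify that both pairs are admissible and that $q_1\neq q_2$ pointwise on $W$. Since $\mu\in C_c^\infty$, the function $(-\Delta)^s\mu$ lies in $C^\infty(\R^n)\cap L^2(\R^n)\cap L^\infty(\R^n)$ with decay $|x|^{-n-2s}$, and $1/(1+\eps\mu)\in C^\infty\cap L^\infty$; hence $q_2$ is a bounded smooth function with $\|q_2\|_{L^\infty(\R^n)}\to 0$ as $\eps\to 0$. Therefore $q_2\in L^p_{loc}(W)$ for all $n/(2s)<p\leq\infty$, and for $\eps$ small enough $q_2\in M_{\gamma_0/\delta_0,+}(H^s\to H^{-s})$ (membership being a smallness condition, with $\gamma_0=1$ here), while $q_1=0$ lies there trivially; likewise $\gamma_1\in[1,(1+\eps\|\mu\|_{L^\infty})^2]$ and $\gamma_2=1$ are uniformly elliptic with lower bound $\gamma_0=1$, continuous on $\R^n$, and $m_{\gamma_1},m_{\gamma_2}\in H^{s,n/s}(\R^n)$, so conditions \ref{uniform ellipticity diffusions}--\ref{integrability potentials} of Theorem~\ref{main thm} hold. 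The pairs are distinct because $\gamma_1\not\equiv 1$. Finally, for $x\in W\subset\Omega_e$ one has $\mu(x)=0$ and $x\notin\supp\mu\subset\Omega$, so by the pointwise formula for the fractional Laplacian
\begin{equation*}
    q_2(x) = -\eps\,(-\Delta)^s\mu(x) = \eps\,C_{n,s}\int_{\supp\mu}\frac{\mu(y)}{|x-y|^{n+2s}}\,dy > 0 = q_1(x),
\end{equation*}
whence $q_1(x)\neq q_2(x)$ for every $x\in W$, as required.

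The main point requiring care is not a deep obstacle but bookkeeping: one must first record the fractional Liouville reduction and the DN-map conjugation formula for the optical tomography equation with a genuine absorption term $q$ (in analogy with \cite{RGZ2022GlobalUniqueness,RZ2022LowReg}), and then check that the induced potential $q_2=q_{\gamma_1}$ indeed belongs to the admissible multiplier class $M_{\gamma_0/\delta_0,+}(H^s\to H^{-s})$, which is where the smallness of $\eps$ enters. The conceptual heart of the construction is that the nonlocality of $(-\Delta)^s$ propagates the effective-potential perturbation over the whole exterior $\Omega_e$, which is what upgrades the conclusion from ``$q_1\neq q_2$ somewhere away from the measurement set'' (as in the local counterexamples of \cite{arridge1998nonuniqueness}) to ``$q_1(x)\neq q_2(x)$ for all $x\in W$''.
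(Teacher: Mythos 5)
Your proposal is correct and rests on the same core mechanism as the paper: the fractional Liouville reduction turns $L_\gamma^s+q$ into $(-\Delta)^s+Q_{\gamma,q}$ with $Q_{\gamma,q}=-(-\Delta)^s m_\gamma/\gamma^{1/2}+q/\gamma$, and the DN maps are conjugate via $\gamma^{1/2}$; so two pairs with the same reduced potential and the same diffusion outside $\Omega$ are indistinguishable. What you do differently, and what it buys: (1) the paper first derives a full \emph{if-and-only-if} characterization of equal DN data in terms of three PDE conditions and then specializes, whereas you only verify the easy ``sufficiency'' direction directly through Lemma~\ref{Auxiliary lemma} -- this is lighter and entirely adequate for a non-uniqueness statement; (2) the paper builds $m_{\gamma_1}$ by solving a fractional Dirichlet problem in an auxiliary region, invoking the fractional maximum principle, mollifying and scaling (inherited from the construction in \cite{RZ2022LowReg}), whereas you simply take $m_{\gamma_1}=\eps\mu$ with $\mu\in C_c^\infty(\Omega)$, $\mu\geq 0$, $\mu\not\equiv 0$ -- this is cleaner and achieves the same regularity, positivity and smallness properties with far less machinery; (3) your pair is the relabelled mirror of the paper's ($q_1=0$, $q_2=q_{\gamma_1}$ rather than $q_2=0$, $q_1=\gamma_1^{1/2}(-\Delta)^sm_{\gamma_1}$), which is immaterial; (4) most usefully, your closing step is an explicit pointwise sign computation, $q_2(x)=\eps C_{n,s}\int_{\supp\mu}\mu(y)|x-y|^{-n-2s}\,dy>0=q_1(x)$ for every $x\in W$, which directly gives the stated conclusion $q_1(x)\neq q_2(x)$ for all $x\in W$. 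The paper's closing line instead argues $q_1|_W\not\equiv 0$ via the UCP, which on its face only rules out identical vanishing; the pointwise statement actually claimed in Theorem~\ref{thm: non uniqueness} requires exactly the sign argument you make (and which the paper's construction also supports, since $m_{\gamma_1}\geq 0$ is compactly supported in $\Omega$). The one piece of bookkeeping you flag but do not write out -- that the conjugation identity $\langle\Lambda_{\gamma,q}f,g\rangle=\langle\Lambda_{Q_{\gamma,q}}(\gamma^{1/2}f),\gamma^{1/2}g\rangle$ holds for the equation with a genuine absorption term -- is exactly what Theorem~\ref{thm: well-posedness for fractional Schrödinger type equation} and Lemma~\ref{Auxiliary lemma} provide, and your check that $\|q_2\|_s\lesssim\|q_2\|_{L^\infty}\lesssim\eps$ is the right way to ensure $q_2\in M_{\gamma_0/\delta_0,+}(H^s\to H^{-s})$ for small $\eps$.
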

   
   Finally, let us note that whether uniqueness or non-uniqueness holds in the general case $q_1\not\equiv q_2$ on $W$ but $W\cap \{q_1=q_2\}$ has no interior points, is not answered by the above results. In fact, if $q_1,q_2$ are arbitrary potentials and the assumption $\Lambda_{\gamma_1,q_2}f|_{W}=\Lambda_{\gamma_2,q_2}f|_{W}$ for all $f\in C_c^{\infty}(W)$ implies $\gamma_1=\gamma_2$ in $\R^n$, then \cite[Corollary~2.7]{RZ2022unboundedFracCald} again shows $q_1=q_2$ in $W$. Hence, if one wants to establish uniqueness also for potentials $q_1,q_2\in M_{\gamma_0/\delta_0,+}(H^s\to H^{-s})$ satisfying $q_1\not\equiv q_2$ on $W$ and $W\cap \text{int}(\{q_1=q_2\})=\emptyset$, one would need to come up with a proof which does not rely on the separate determination of the coefficients as the one given in this article. 
    



    \section{Preliminaries}
    \label{sec: Preliminaries}

    Throughout this article $\Omega\subset \R^n$ is always an open set and the space dimension $n$ is fixed but otherwise arbitrary.
    
    \subsection{Fractional Laplacian and fractional conductivity operator}
    We define for $s> 0$ the fractional Laplacian of order $s$ by
    \begin{equation}
        (-\Delta)^su\vcentcolon = \ifourier(|\xi|^{2s}\widehat{u}),
    \end{equation}
    whenever the right hand side is well-defined. Here, $\mathcal{F}$ and $\mathcal{F}^{-1}$ denote the Fourier transform and the inverse Fourier transform, respectively. In this article we use the following convention
    \[
      \fourier u(\xi)\vcentcolon = \hat u(\xi) \vcentcolon =     \int_{\R^n} u(x)e^{-ix \cdot \xi} \,dx.
    \]
    If $u\colon\R^n\to\R$ is sufficiently regular and $s\in(0,1)$, the fractional Laplacian can be calculated via
    \begin{equation}
    \label{eq: singular int def frac Lap}
    \begin{split}
     (-\Delta)^su(x)&=C_{n,s}\,\text{p.v.}\int_{\R^n}\frac{u(x)-u(y)}{|x-y|^{n+2s}}\,dy\\
       &= -\frac{C_{n,s}}{2}\int_{\R^n}\frac{u(x+y)+u(x-y)-2u(x)}{|y|^{n+2s}}\,dy,
    \end{split}
    \end{equation}
    where $C_{n,s}>0$ is a normalization constant. Based on formula \eqref{eq: singular int def frac Lap}, we introduce the fractional conductivity operator $L_{\gamma}^s$ by
    \begin{equation}
    \label{eq: frac cond op}
        L_{\gamma}^su(x)=C_{n,s}\gamma^{1/2}(x)\,\text{p.v.}\int_{\R^n}\gamma^{1/2}(y)\frac{u(x)-u(y)}{|x-y|^{n+2s}}\,dy
    \end{equation}
    where $\gamma\colon\R^n\to\R_+$ is the so-called conductivity.

    \subsection{Sobolev spaces}
    The classical Sobolev spaces of order $k\in\N$ and integrability exponent $p\in [1,\infty]$ are denoted by $W^{k,p}(\Omega)$. Moreover, we let $W^{s,p}(\Omega)$ stand for the fractional Sobolev spaces, when $s\in \R_+\setminus\N$ and $1\leq p < \infty$. These spaces are also called Slobodeckij spaces or Gagliardo spaces. If $1\leq p<\infty$ and $s=k+\sigma$ with $k\in \N_0$, $0<\sigma<1$, then they are defined by
\[
    W^{s,p}(\Omega)\vcentcolon =\{\,u\in W^{k,p}(\Omega)\,;\, [\partial^{\alpha} u]_{W^{\sigma,p}(\Omega)}<\infty\quad \forall |\alpha|=k\, \},
\]
where 
\[
    [u]_{W^{\sigma,p}(\Omega)}\vcentcolon =\left(\int_{\Omega}\int_{\Omega}\frac{|u(x)-u(y)|^p}{|x-y|^{n+\sigma p}}\,dxdy\right)^{1/p}
\]
is the so-called Gagliardo seminorm. The Slobodeckij spaces are naturally endowed with the norm
\[
\|u\|_{W^{s,p}(\Omega)}\vcentcolon =\left(\|u\|_{W^{k,p}(\Omega)}^p+\sum_{|\alpha|=k}[\partial^{\alpha}u]_{W^{\sigma,p}(\Omega)}^p\right)^{1/p}.
\]

    We define the Bessel potential space $H^{s,p}(\R^n)$ for $1\leq p<\infty$, $s\in\R$ by
\begin{equation}
\label{eq: Bessel pot spaces}
    H^{s,p}(\R^n) \vcentcolon = \{ u \in \tempered(\R^n)\,;\, \vev{D}^su \in L^p(\R^n)\}
\end{equation}
 which we endow with the norm $\norm{u}_{H^{s,p}(\R^n)} \vcentcolon = \norm{\vev{D}^su}_{L^p(\R^n)}$. Here $\tempered(\R^n)$ denotes the space of tempered distributions, which is the dual of the space of Schwartz functions $\schwartz(\R^n)$, and $\langle D\rangle^s$ is the Fourier multiplier with symbol $\langle\xi\rangle^s=(1+|\xi|^2)^{s/2}$. In the special case $p=2$ and $0<s<1$, the spaces $H^{s,2}(\R^n)$ and $W^{s,2}(\R^n)$ coincide and they are commonly denoted by $H^s(\R^n)$.
 
 More concretely, the Gagliardo seminorm $[\,\cdot\,]_{H^s(\R^n)}$ and $\|\cdot\|_{\dot{H}^s(\R^n)}$ are equivalent on $H^s(\R^n)$ (cf.~\cite[Proposition~3.4]{DINEPV-hitchhiker-sobolev}).
 Throughout, this article we will assume that $0<s<\min(1,n/2)$ such that $H^s(\R^n)\hookrightarrow L^{2^*}(\R^n)$, where $2^*$ is the critical Sobolev exponent given by $2^*=\frac{2n}{n-2s}$. 
 
 If $\Omega\subset \R^n$, $F\subset\R^n$ are given open and closed sets, then we define the following local Bessel potential spaces:
\begin{equation}\label{eq: local bessel pot spaces}
\begin{split}
    \widetilde{H}^{s,p}(\Omega) &\vcentcolon = \mbox{closure of } C_c^\infty(\Omega) \mbox{ in } H^{s,p}(\R^n),\\
\end{split}
\end{equation}

We close this section by introducing the notion of domains bounded in one direction and recalling the related fractional Poincar\'e inequalities. We say that an open set $\Omega_\infty \subset\R^n$ of the form $\Omega_\infty=\R^{n-k}\times \omega$, where $n\geq k\geq 1$ and $\omega \subset \R^k$ is a bounded open set, is a \emph{cylindrical domain}. An open set $\Omega \subset \R^n$ is called \emph{bounded in one direction} if there exists a cylindrical domain $\Omega_\infty \subset \R^n$ and a rigid Euclidean motion $A(x) = Lx + x_0$, where $L$ is a linear isometry and $x_0 \in \R^n$, such that $\Omega \subset A\Omega_\infty$. Fractional Poincaré inequalities in Bessel potential spaces on domains bounded in one direction were recently studied in \cite{RZ2022unboundedFracCald}. In this article a $L^p$ generalization of the following result is established:

\begin{theorem}[{Poincar\'e inequality, \cite[Theorem~2.2]{RZ2022unboundedFracCald}}]
\label{thm: Poinc Unbounded Doms} Let $\Omega\subset\R^n$ be an open set that is bounded in one direction and $0<s<1$. Then there exists $C(n,s,\Omega)>0$ such that
    \begin{equation}
    \label{eq: poincare on L1}
        \|u\|^2_{L^2(\R^n)}\leq C[u]_{H^s(\R^n)}^2
    \end{equation}
    for all $u\in \widetilde{H}^{s}(\Omega)$.
\end{theorem}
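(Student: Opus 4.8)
The plan is to reduce to the case where $\Omega$ is contained in an axis-aligned slab and then to bound $\|u\|_{L^2(\R^n)}$ using only the nonlocal interaction of $u$ with one of the two half-spaces adjacent to that slab. By the definition of ``bounded in one direction'' there are a cylindrical domain $\Omega_\infty=\R^{n-k}\times\omega$ with $\omega\subset\R^k$ bounded and a rigid Euclidean motion $A$ with $\Omega\subset A\Omega_\infty$; bounding a single coordinate of $\omega$ shows that $\Omega_\infty$, hence $\Omega$, lies inside a slab $\{a'<\langle x,\theta\rangle<b'\}$ for some unit vector $\theta$ and $a'<b'$. Any rigid motion $R$ acts isometrically on $H^s(\R^n)$, preserving $\|\cdot\|_{L^2(\R^n)}$ and the Gagliardo seminorm $[\cdot]_{H^s(\R^n)}$ and carrying $\widetilde H^s(\Omega)$ isometrically onto $\widetilde H^s(R^{-1}\Omega)$; choosing $R$ so that it rotates $\theta$ onto $e_n$, I may assume $\Omega\subset S=\{x\in\R^n:a<x_n<b\}$. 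Since $C_c^\infty(\Omega)$ is dense in $\widetilde H^s(\Omega)$ by definition and both $u\mapsto\|u\|_{L^2(\R^n)}$ and $u\mapsto[u]_{H^s(\R^n)}$ are continuous on $H^s(\R^n)$, it then suffices to prove \eqref{eq: poincare on L1} for $u\in C_c^\infty(\Omega)$, for which $\supp u\subset\{a\le x_n\le b\}$, so that $u$ vanishes identically on the half-space $\{y\in\R^n:y_n>b\}$.

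The core of the proof is a single explicit computation. Starting from $[u]_{H^s(\R^n)}^2=\int_{\R^n}\int_{\R^n}|u(x)-u(y)|^2|x-y|^{-n-2s}\,dx\,dy$, I would retain only the contribution of $x\in S$ and $y_n>b$: there $u(y)=0$, the integrand equals $|u(x)|^2|x-y|^{-n-2s}$, and all discarded terms are nonnegative, giving
\begin{equation*}
    [u]_{H^s(\R^n)}^2\ \ge\ \int_{\{a\le x_n\le b\}}|u(x)|^2\left(\int_{\{y_n>b\}}\frac{dy}{|x-y|^{n+2s}}\right)dx .
\end{equation*}
Writing $x=(x',x_n)$, $y=(y',y_n)$ and integrating the inner integral first over $y'\in\R^{n-1}$ through the substitution $y'=x'+(y_n-x_n)z'$ produces $c_{n,s}\,|x_n-y_n|^{-1-2s}$, where $c_{n,s}=\int_{\R^{n-1}}(1+|z'|^2)^{-(n+2s)/2}\,dz'<\infty$ because $n+2s>n-1$; then $\int_b^\infty(y_n-x_n)^{-1-2s}\,dy_n=\frac{1}{2s}(b-x_n)^{-2s}\ge\frac{1}{2s}(b-a)^{-2s}$ for every $x_n\le b$. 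Inserting these bounds yields $[u]_{H^s(\R^n)}^2\ge\frac{c_{n,s}}{2s(b-a)^{2s}}\|u\|_{L^2(\R^n)}^2$, which is \eqref{eq: poincare on L1} with the explicit constant $C=2s(b-a)^{2s}/c_{n,s}$ (transporting back through $R$ affects $C$ by at most a harmless factor depending on how $\omega$ was enclosed in a slab).

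I do not expect a genuine obstacle here: the inequality is soft, and the single point one must notice is that it is enough to use the interaction of $\Omega$ with one adjacent half-space rather than with the whole exterior. The remaining ingredients call only for routine care: the rigid-motion reduction (so that the slab becomes axis-aligned and the one-variable integration applies), the density step (so that the a priori possibly infinite Gagliardo energy can be controlled on all of $\widetilde H^s(\Omega)$, not merely on test functions), and the convergence of $c_{n,s}$, which is automatic for all $0<s<1$ and all $n\ge1$. One could instead Fourier-transform in the bounded direction and reduce slicewise to the one-dimensional fractional Poincar\'e inequality on $(a,b)$, but since that inequality is itself proved by exactly the computation above, the direct argument is the most economical.
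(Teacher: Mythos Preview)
Your argument is correct. The reduction to an axis-aligned slab via rigid motions, the density step, and the explicit lower bound obtained by retaining only the interaction of $u$ with the adjacent half-space $\{y_n>b\}$ are all sound; the scaling substitution $y'=x'+(y_n-x_n)z'$ and the subsequent one-variable integration are carried out correctly, and the convergence of $c_{n,s}$ holds for every $0<s<1$ and $n\ge 1$ (with the convention $c_{1,s}=1$).

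As for comparison with the paper: there is nothing to compare. The paper does not prove this theorem but merely quotes it from \cite[Theorem~2.2]{RZ2022unboundedFracCald}, adding only the remark that the $\dot H^s$-seminorm appearing there is equivalent to the Gagliardo seminorm on $H^s(\R^n)$. Your self-contained direct proof is the standard one for fractional Poincar\'e inequalities on slabs and supplies exactly what the paper omits; it also yields the explicit constant $C=2s(b-a)^{2s}/c_{n,s}$, which the paper does not record.
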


\begin{remark}
    Let us note, that actually in \cite[Theorem~2.2]{RZ2022unboundedFracCald} the right hand side \eqref{eq: poincare on L1} is replaced by the seminorm $\|u\|_{\dot{H}^s(\R^n)}=\|(-\Delta)^{s/2}u\|_{L^{2}(\R^n)}$, but as already noted for $H^s(\R^n)$ functions these two expressions are equivalent.
\end{remark}

\subsection{Sobolev multiplier}
	
In this section we briefly introduce the Sobolev multipliers between the energy spaces $H^s(\R^n)$ and for more details we point to the book \cite{MS-theory-of-sobolev-multipliers} of Maz'ya and Shaposhnikova. 

Let $s,t\in\R$. If $f\in \distr(\R^n)$ is a distribution, we say that $f\in M(H^s\rightarrow H^t)$ whenever the norm 
\begin{equation}
    \|f\|_{s,t} \vcentcolon = \sup \{\abs{\ip{f}{u v}} \,;\, u,v \in C_c^\infty(\mathbb R^n), \norm{u}_{H^s(\R^n)} = \norm{v}_{H^{-t}(\R^n)} =1 \}
\end{equation}
is finite.
In the special case $t=-s$, we write $\|\cdot\|_s$ instead of $\|\cdot\|_{s,-s}$. Note that for any $f\in M(H^s\rightarrow H^t)$ and $u,v \in C_c^\infty(\mathbb R^n)$, we have the multiplier estimate
\begin{equation}
\label{multiplier-estimate}
    \abs{\ip{f}{uv}} \leq \|f\|_{s,t}\norm{u}_{H^s(\R^n)} \norm{v}_{H^{-t}(\R^n)}.
\end{equation}

By a density argument one easily sees that there is a unique linear multiplication map $m_f \colon H^s(\R^n) \to H^t(\R^n)$, $u\mapsto m_f(u)$. To simplify the notation we will write $fu$ instead of $m_f(u)$. 

Finally, we define certain subclasses of Sobolev multipliers from $H^s(\R^n)$ to $H^{-s}(\R^n)$. 
    For all $\delta > 0$ and $0<s<1$, we define the following convex sets
    \begin{equation}
    \label{eq: special classes of Sobolev multipliers}
    \begin{split}
        M_{\delta}(H^s \to H^{-s}) &\vcentcolon = \{\,q \in M(H^s \to H^{-s}) \,;\,\|q\|_{s} < \delta\,\},\\
        M_{+}(H^s \to H^{-s})& \vcentcolon =M(H^s\to H^{-s})\cap \distr_+(\R^n),\\
        M_{\delta,+}(H^s\to H^{-s})&\vcentcolon =M_{\delta}(H^s\to H^{-s})+M_{+}(H^s\to H^{-s}),
    \end{split}
    \end{equation}
    where $\distr_+(\R^n)$ denotes the non-negative distributions.
    
    Note that by definition of the multiplication map $u\mapsto fu$ one has $\langle qu,u\rangle \geq 0$ for all $u\in H^s(\R^n)$, whenever $q\in M_{+}(H^s \to H^{-s})$.

    \section{Well-posedness and DN map of forward problem}
    \label{subsec: forward}
    
    We start in Section~\ref{sec: basics} by recalling basic properties of the operator $L_{\gamma}^s$, like the fractional Liouville reduction, and then in Section~\ref{subsec: well-posedness and DN maps} we establish well-posedness results for the nonlocal optical tomography equation and the related fractional Schr\"odinger equation as well as introduce the associated DN maps.
    
    \subsection{Basics on the fractional conductivity operator \texorpdfstring{$L_{\gamma}^s$}{TEXT}}
    \label{sec: basics}  
    In this section, we recall several results related to the operator $L_{\gamma}^s$.
    
    First, for any uniformly elliptic coefficient $\gamma\in L^{\infty}(\R^n)$ and $0<s<1$, the operator $L_{\gamma}^s$ is weakly defined via the bilinear map $B_{\gamma}\colon H^s(\R^n)\times H^s(\R^n)\to\R$ with
    \begin{equation}
    \label{eq: bilinear frac cond eq}
        B_{\gamma}(u, v) 
        \vcentcolon =\frac{C_{n,s}}{2}\int_{\R^{2n}}\gamma^{1/2}(x)\gamma^{1/2}(y)\frac{(u(x)-u(y))(v(x)-v(y))}{|x-y|^{n+2s}}\,dxdy
    \end{equation}
    for all $u,v\in H^s(\R^n)$. Similarly, if $q\in M(H^s\to H^{-s})$, the bilinear map $B_q\colon H^s(\R^n)\times H^s(\R^n)\to \R$ representing the weak form of the fractional Schr\"odinger operator $(-\Delta)^s+q$ is defined via
    \begin{equation}
    \label{eq: schroedinger operator}
        B_{q}(u,v)\vcentcolon = \langle (-\Delta)^{s/2}u,(-\Delta)^{s/2}v\rangle_{L^2(\R^n)}+\langle qu,v\rangle
    \end{equation}
    for all $u,v\in H^s(\R^n)$.
    In \cite[Section~3]{RZ2022LowReg}, we showed that if the background deviation $m_{\gamma}=\gamma^{1/2}-1$ belongs to $H^{s,n/s}(\R^n)$, then the \emph{fractional Liouville reduction} is still valid, which was first established in \cite{RZ2022unboundedFracCald} for conductivities having background deviation in $H^{2s,n/2s}(\R^n)$ and hence $(-\Delta)^sm_{\gamma}\in L^{n/2s}(\R^n)$. More precisely, we established the following results:
    
    \begin{lemma}[{Fractional Liouville reduction}]
    \label{Lemma: fractional Liouville reduction}
        Let $0<s<\min(1,n/2)$, suppose $\Omega\subset\R^n$ is an open set and assume that the background deviation $m_{\gamma}=\gamma^{1/2}-1$ of the uniformly elliptic conductivity $\gamma\in L^{\infty}(\R^n)$ belongs to $H^{s,n/s}(\R^n)$. Then the following assertions hold:
        \begin{enumerate}[(i)]
        \item\label{estimate mathcal M} If $\mathcal{M}=m_{\gamma}$ or $\frac{m_{\gamma}}{m_{\gamma}+1}$, then $\mathcal{M}\in L^{\infty}(\R^n)\cap H^{s,n/s}(\R^n)$ and one has the estimate
        \begin{equation}
        \label{eq: continuity estimate background}
            \begin{split}
                \|\mathcal{M}v\|_{H^s(\R^n)}\leq C(\|\mathcal{M}\|_{L^{\infty}(\R^n)}+\|\mathcal{M}\|_{H^{s,n/s}(\R^n)})\|v\|_{H^s(\R^n)} 
            \end{split}
        \end{equation}
        for all $v\in H^s(\R^n)$ and some $C>0$. Moreover, if $u\in \widetilde{H}^s(\Omega)$, then there holds $\gamma^{\pm 1/2}u\in \widetilde{H}^s(\Omega)$
        \item\label{potential frac Liouville reduction} The distribution $q_{\gamma}=-\frac{(-\Delta)^sm_{\gamma}}{\gamma^{1/2}}$, defined by
    \[
        \langle q_{\gamma},\varphi\rangle\vcentcolon =-\langle (-\Delta)^{s/2}m_{\gamma},(-\Delta)^{s/2}(\gamma^{-1/2}\varphi)\rangle_{L^2(\R^n)}
    \]
    for all $\varphi\in C_c^{\infty}(\R^n)$, belongs to $M(H^s\rightarrow H^{-s})$. Moreover, for all $u,\varphi \in H^s(\R^n)$, we have
    \begin{equation}
        \langle q_{\gamma}u,\varphi\rangle = -\langle (-\Delta)^{s/2}m_{\gamma},(-\Delta)^{s/2}(\gamma^{-1/2}u\varphi)\rangle_{L^2(\R^n)}
    \end{equation}
    satisfying the estimate
    \begin{equation}
    \label{eq: bilinear estimate}
    \begin{split}
         |\langle q_{\gamma}u,\varphi\rangle|&\leq C(1+\|m_{\gamma}\|_{L^{\infty}(\R^n)}+\|m_{\gamma}\|_{H^{s,n/s}(\R^n)})
          \\ &\quad\quad \cdot \|m_{\gamma}\|_{H^{s,n/s}(\R^n)}\|u\|_{H^s(\R^n)}\|\varphi\|_{H^s(\R^n)}.
    \end{split}
    \end{equation}
\item\label{liouville red identity} There holds $B_{\gamma}(u,\varphi)=B_{q_{\gamma}}(\gamma^{1/2}u,\gamma^{1/2}\varphi)$ for all $u,\varphi\in H^s(\R^n)$, where $B_{q_{\gamma}}\colon H^s(\R^n)\times H^s(\R^n)\to \R$ is defined via \eqref{eq: schroedinger operator}.
\end{enumerate}
    \end{lemma}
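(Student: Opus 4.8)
Uniform ellipticity furnishes a constant $c>0$ with $\gamma^{1/2}\ge c$ a.e., so $m=\gamma^{1/2}-1\in L^\infty(\R^n)$ takes values in a compact interval $J\Subset(-1,\infty)$; since $F_1(t)=t$ and $F_2(t)=t/(1+t)$ are smooth on a neighbourhood of $J$ with $F_1(0)=F_2(0)=0$, the superposition (Nemytskii) estimate for Bessel potential spaces yields $\mathcal M=F_j(m)\in L^\infty(\R^n)\cap H^{s,n/s}(\R^n)$ with $\|\mathcal M\|_{H^{s,n/s}(\R^n)}\le C(c,\|m\|_{L^\infty})\|m\|_{H^{s,n/s}(\R^n)}$. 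The multiplication estimate \eqref{eq: continuity estimate background} then follows from the fractional Leibniz (Kato--Ponce) rule
\[
\|\langle D\rangle^s(\mathcal M v)\|_{L^2(\R^n)}\lesssim \|\langle D\rangle^s\mathcal M\|_{L^{n/s}(\R^n)}\|v\|_{L^{2^*}(\R^n)}+\|\mathcal M\|_{L^\infty(\R^n)}\|\langle D\rangle^s v\|_{L^2(\R^n)}
\]
(admissible since $\tfrac12=\tfrac sn+\tfrac1{2^*}$), together with the critical embedding $H^s(\R^n)\hookrightarrow L^{2^*}(\R^n)$ and $\|\mathcal M v\|_{L^2}\le\|\mathcal M\|_{L^\infty}\|v\|_{L^2}$. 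Finally, for $u\in\widetilde H^s(\Omega)$ pick $\varphi_k\in C_c^\infty(\Omega)$ with $\varphi_k\to u$ in $H^s(\R^n)$; then $\gamma^{1/2}\varphi_k=\varphi_k+m\varphi_k$ and $\gamma^{-1/2}\varphi_k=\varphi_k-F_2(m)\varphi_k$ are supported in $\Omega$ and lie in $H^s(\R^n)$ by the estimate above, hence in $\widetilde H^s(\Omega)$; letting $k\to\infty$ (again by that estimate) gives $\gamma^{\pm1/2}u\in\widetilde H^s(\Omega)$.

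\textbf{Part (ii).} By (i) the map $v\mapsto\gamma^{-1/2}v=v-F_2(m)v$ is bounded on $H^s(\R^n)$, so $\gamma^{-1/2}\varphi\in H^s(\R^n)$ for $\varphi\in C_c^\infty(\R^n)$, and $\langle q_\gamma,\varphi\rangle$ is well defined once one knows that $(-\Delta)^{s/2}\colon H^{s,n/s}(\R^n)\to L^{n/s}(\R^n)$ is bounded (a Mikhlin multiplier statement) and that $(-\Delta)^{s/2}(\gamma^{-1/2}\varphi)$ lies in the Hölder-conjugate space $L^{n/(n-s)}(\R^n)$. To prove \eqref{eq: bilinear estimate} I would, for $u,\varphi\in C_c^\infty(\R^n)$ (the general case $u,\varphi\in H^s(\R^n)$ then following by density), first apply the fractional Leibniz rule to $u\varphi$ with exponents $(2,2^*)$ to obtain $u\varphi\in L^{n/(n-2s)}(\R^n)$ and $(-\Delta)^{s/2}(u\varphi)\in L^{n/(n-s)}(\R^n)$, both controlled by $\|u\|_{H^s(\R^n)}\|\varphi\|_{H^s(\R^n)}$ via $H^s\hookrightarrow L^{2^*}$, and then apply it once more to $F_2(m)\cdot(u\varphi)$, the term hitting $F_2(m)$ being paired with $u\varphi\in L^{n/(n-2s)}$ and the other with $(-\Delta)^{s/2}(u\varphi)\in L^{n/(n-s)}$, using $F_2(m)\in L^\infty\cap H^{s,n/s}$ from (i). Hölder against $(-\Delta)^{s/2}m_\gamma\in L^{n/s}$ then yields \eqref{eq: bilinear estimate}, density upgrades the identity $\langle q_\gamma u,\varphi\rangle=-\langle(-\Delta)^{s/2}m_\gamma,(-\Delta)^{s/2}(\gamma^{-1/2}u\varphi)\rangle_{L^2(\R^n)}$ to all $u,\varphi\in H^s(\R^n)$, and in particular $q_\gamma\in M(H^s\to H^{-s})$.

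\textbf{Part (iii).} With $\widetilde u=\gamma^{1/2}u$, $\widetilde v=\gamma^{1/2}v$ and $w=\gamma^{-1/2}\widetilde u\widetilde v=\gamma^{1/2}uv$, writing $u=\gamma^{-1/2}\widetilde u$, $v=\gamma^{-1/2}\widetilde v$ and $\gamma^{1/2}=1+m$ and expanding gives the a.e.\ pointwise identity
\begin{equation*}
\begin{split}
\gamma^{1/2}(x)\gamma^{1/2}(y)\big(u(x)-u(y)\big)\big(v(x)-v(y)\big)&=\big(\widetilde u(x)-\widetilde u(y)\big)\big(\widetilde v(x)-\widetilde v(y)\big)\\
&\quad-\big(m(x)-m(y)\big)\big(w(x)-w(y)\big).
\end{split}
\end{equation*}
Multiplying by $\tfrac{C_{n,s}}{2}|x-y|^{-n-2s}$ and integrating over $\R^{2n}$, the left-hand side becomes $B_\gamma(u,v)$, the first term on the right becomes $\langle(-\Delta)^{s/2}\widetilde u,(-\Delta)^{s/2}\widetilde v\rangle_{L^2(\R^n)}$, and the second, carrying its minus sign, becomes $-\langle(-\Delta)^{s/2}m_\gamma,(-\Delta)^{s/2}w\rangle_{L^2(\R^n)}=\langle q_\gamma\widetilde u,\widetilde v\rangle$ by the definition in (ii); hence $B_\gamma(u,v)=B_{q_\gamma}(\widetilde u,\widetilde v)$. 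For $u,v\in C_c^\infty(\R^n)$ all three double integrals converge absolutely (the $m$-integral because its integrand is the difference of the other two), so the splitting and the polarization identity for $(-\Delta)^s$ are legitimate; the general case $u,v\in H^s(\R^n)$ then follows by density, using (i), (ii) and the continuity of the bilinear forms $B_\gamma$ and $B_{q_\gamma}$ on $H^s(\R^n)\times H^s(\R^n)$.

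\textbf{Main obstacle.} The real difficulty is that one works at the critical Sobolev exponent: the background deviation lies only in $H^{s,n/s}(\R^n)$, so $(-\Delta)^sm_\gamma$ carries no pointwise information and every step must be pushed through endpoint fractional Leibniz rules and the embedding $H^s\hookrightarrow L^{2^*}$. What makes the triple product in (ii)---and therefore the reduction (iii)---close is precisely that a product of two $H^s(\R^n)$ functions lands in $L^{n/(n-2s)}(\R^n)$ with $(-\Delta)^{s/2}$-image in $L^{n/(n-s)}(\R^n)$, the Hölder conjugate of $L^{n/s}(\R^n)$; securing this, together with the superposition estimate $F_j(m)\in H^{s,n/s}(\R^n)$ of (i) in the Bessel-potential (rather than Slobodeckij) scale, is where essentially all the analytic work is spent.
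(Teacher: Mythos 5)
The paper does not prove Lemma~\ref{Lemma: fractional Liouville reduction}; it imports it from \cite{RZ2022LowReg}, so the natural benchmark is the proof given there. Your argument follows essentially the same route: part (i) via a Nemytskii/superposition estimate for $F_2(t)=t/(1+t)$ together with the endpoint Kato--Ponce inequality $\|\langle D\rangle^s(\mathcal M v)\|_{L^2}\lesssim\|\langle D\rangle^s\mathcal M\|_{L^{n/s}}\|v\|_{L^{2^*}}+\|\mathcal M\|_{L^\infty}\|\langle D\rangle^s v\|_{L^2}$; part (ii) by pushing the triple product $\gamma^{-1/2}u\varphi$ through the same Leibniz rule so that $(-\Delta)^{s/2}(\gamma^{-1/2}u\varphi)\in L^{n/(n-s)}$ dualizes against $(-\Delta)^{s/2}m_\gamma\in L^{n/s}$; and part (iii) from the pointwise algebraic identity
\begin{equation}
\begin{split}
\gamma^{1/2}(x)\gamma^{1/2}(y)\bigl(u(x)-u(y)\bigr)\bigl(v(x)-v(y)\bigr)&=\bigl(\widetilde u(x)-\widetilde u(y)\bigr)\bigl(\widetilde v(x)-\widetilde v(y)\bigr)\\
&\quad-\bigl(m(x)-m(y)\bigr)\bigl(w(x)-w(y)\bigr),
\end{split}
\end{equation}
which I checked by direct expansion and is correct, with the integration and density step handled cleanly (the absolute integrability of the $m$-term as a difference of the other two is a valid observation). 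This is the standard strategy and your write-up is sound; the one place worth tightening is the appeal to a Nemytskii theorem in the Bessel-potential scale $H^{s,n/s}$ for $p\neq 2$, which holds but should be cited explicitly (e.g.\ a paraproduct-based composition estimate), since the Slobodeckij-space version does not transfer automatically.
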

    
    \subsection{Well-posedness results and DN maps}
    \label{subsec: well-posedness and DN maps}
    
    First, let us introduce for a given uniformly elliptic function $\gamma\in L^{\infty}(\R^n)$ and a potential $q\in M(H^s\to H^{-s})$ the bilinear map $B_{\gamma,q}\colon H^s(\R^n)\times H^s(\R^n)\to \R$ representing the weak form of the nonlocal optical tomography operator $L_{\gamma}^s+q$ via
    \begin{equation}
        B_{\gamma,q}(u,v)=B_{\gamma}(u,v)+\langle qu,v\rangle
    \end{equation}
    for all $u,v\in H^s(\R^n)$. As usual we say that a function $u\in H^s(\R^n)$ solves the Dirichlet problem
    \begin{equation}
    \begin{split}
        L_{\gamma}^s u+qu&= F\quad\text{in}\quad\Omega,\\
            u&= f\quad\text{in}\quad\Omega_e
    \end{split}
    \end{equation}
    for a given function $f\in H^s(\R^n)$ and $F\in (\widetilde{H}^s(\Omega))^*$ if there holds
    \[
        B_{\gamma,q}(u,\varphi)=\langle F,\varphi\rangle \enspace\text{for all}\enspace \varphi\in \widetilde{H}^s(\Omega)
    \]
    and $u-f\in \widetilde{H}^s(\Omega)$. We have the following well-posedness result for the nonlocal optical tomography equation.

    \begin{theorem}[Well-posedness and DN map for nonlocal optical tomography equation]
    \label{thm: well-posedness opt tom eq}
        Let $\Omega\subset \R^n$ be an open set which is bounded in one direction and $0<s<1$. Moreover, assume that the uniformly elliptic diffusion $\gamma\in L^{\infty}(\R^n)$ is bounded from below by $\gamma_0>0$ and the potential $q$ belongs to $M_{\gamma_0/\delta_0,+}(H^s\to H^{-s})$.
        Then the following assertions hold:
    \begin{enumerate}[(i)]
        \item\label{item 1 well-posedness opt tom eq} For all $f\in X\vcentcolon = H^s(\R^n)/\widetilde{H}^s(\Omega)$ there is a unique weak solution $u_f\in H^s(\R^n)$ of the fractional conductivity equation
        \begin{equation}
        \label{eq: nonlocal opt tomography problem}
        \begin{split}
            L_{\gamma}^su+qu&= 0\quad\text{in}\quad\Omega,\\
            u&= f\quad\text{in}\quad\Omega_e.
        \end{split}
        \end{equation}
        \item\label{item 2 DN map opt tom eq} The exterior DN map $\Lambda_{\gamma,q}\colon X\to X^*$ given by 
        \begin{equation}
        \label{eq: DN map opt tom eq}
        \begin{split}
            \langle \Lambda_{\gamma,q}f,g\rangle \vcentcolon =B_{\gamma,q}(u_f,g),
        \end{split}
        \end{equation}
        where $u_f\in H^s(\R^n)$ is the unique solution to  \eqref{eq: nonlocal opt tomography problem} with exterior value $f$, is a well-defined bounded linear map. 
    \end{enumerate}
    \end{theorem}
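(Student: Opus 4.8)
The plan is to reduce the nonlocal optical tomography equation to a coercivity statement for the bilinear form $B_{\gamma,q}$ on $\widetilde{H}^s(\Omega)$ and then apply Lax--Milgram. First I would record boundedness: for $u,v\in H^s(\R^n)$, the term $B_\gamma(u,v)$ is bounded by $C\|\gamma\|_{L^\infty}^{1/2}\cdots$ — more precisely by $\frac{C_{n,s}}{2}\|\gamma\|_{L^\infty(\R^n)}[u]_{H^s(\R^n)}[v]_{H^s(\R^n)}$ after using $\gamma^{1/2}(x)\gamma^{1/2}(y)\leq \|\gamma\|_{L^\infty}$ and Cauchy--Schwarz on the Gagliardo double integral, while $|\langle qu,v\rangle|\leq \|q\|_s\|u\|_{H^s(\R^n)}\|v\|_{H^s(\R^n)}$ by the multiplier estimate \eqref{multiplier-estimate} (recall $q\in M_{\gamma_0/\delta_0,+}$ decomposes as $q_1+q_2$ with $\|q_1\|_s<\gamma_0/\delta_0$ and $q_2\in M_+$, and $M_+$ multipliers are also bounded $H^s\to H^{-s}$). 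Hence $B_{\gamma,q}$ is a bounded bilinear form on $H^s(\R^n)\times H^s(\R^n)$, which in particular gives the boundedness of $\Lambda_{\gamma,q}$ in part \ref{item 2 DN map opt tom eq} once existence is known.

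Next, coercivity on $\widetilde{H}^s(\Omega)$. For $u\in\widetilde{H}^s(\Omega)$, uniform ellipticity gives $B_\gamma(u,u)\geq \gamma_0\cdot\frac{C_{n,s}}{2}\int_{\R^{2n}}\frac{(u(x)-u(y))^2}{|x-y|^{n+2s}}\,dxdy = \gamma_0\,c_{n,s}[u]_{H^s(\R^n)}^2$ (with the normalization making $\frac{C_{n,s}}{2}\|\cdot\|$ the square of $(-\Delta)^{s/2}$; I would just write $B_\gamma(u,u)\geq \gamma_0\|(-\Delta)^{s/2}u\|_{L^2(\R^n)}^2$). For the potential, splitting $q=q_1+q_2$ with $q_2\in M_+$ gives $\langle q_2 u,u\rangle\geq 0$, while $\langle q_1 u,u\rangle\geq -\|q_1\|_s\|u\|_{H^s(\R^n)}^2$; since $\Omega$ is bounded in one direction, Theorem~\ref{thm: Poinc Unbounded Doms} together with Remark following it gives $\|u\|_{H^s(\R^n)}^2\leq (1+C_{opt})\|(-\Delta)^{s/2}u\|_{L^2(\R^n)}^2\leq \delta_0\|(-\Delta)^{s/2}u\|_{L^2(\R^n)}^2$ by the choice $\delta_0=2\max(1,C_{opt})$. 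Therefore
\begin{equation}
B_{\gamma,q}(u,u)\geq \gamma_0\|(-\Delta)^{s/2}u\|_{L^2(\R^n)}^2-\|q_1\|_s\delta_0\|(-\Delta)^{s/2}u\|_{L^2(\R^n)}^2\geq (\gamma_0-\|q_1\|_s\delta_0)\|(-\Delta)^{s/2}u\|_{L^2(\R^n)}^2,
\end{equation}
and since $\|q_1\|_s<\gamma_0/\delta_0$ the constant $\gamma_0-\|q_1\|_s\delta_0>0$ is strictly positive; applying Poincaré once more converts this into coercivity with respect to the full $\|\cdot\|_{H^s(\R^n)}$ norm on $\widetilde{H}^s(\Omega)$.

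With boundedness and coercivity in hand, part \ref{item 1 well-posedness opt tom eq} follows by the standard argument: given $f\in X=H^s(\R^n)/\widetilde{H}^s(\Omega)$, pick a representative (still called $f$), set $u=f+v$ with unknown $v\in\widetilde{H}^s(\Omega)$, and solve $B_{\gamma,q}(v,\varphi)=-B_{\gamma,q}(f,\varphi)$ for all $\varphi\in\widetilde{H}^s(\Omega)$; the right-hand side is a bounded linear functional on $\widetilde{H}^s(\Omega)$ by the boundedness step, so Lax--Milgram yields a unique $v$, hence a unique $u_f$, and uniqueness is independent of the chosen representative of $f$. For part \ref{item 2 DN map opt tom eq}, I would check that $\langle\Lambda_{\gamma,q}f,g\rangle:=B_{\gamma,q}(u_f,g)$ is well-defined on $X\times X$: if $g,g'$ are two representatives differing by $\varphi\in\widetilde{H}^s(\Omega)$ then $B_{\gamma,q}(u_f,g-g')=B_{\gamma,q}(u_f,\varphi)=0$ since $u_f$ solves the equation with $F=0$; linearity is clear from linearity of $f\mapsto u_f$ (which in turn follows from uniqueness), and boundedness $|\langle\Lambda_{\gamma,q}f,g\rangle|\leq C\|u_f\|_{H^s(\R^n)}\|g\|_{H^s(\R^n)}\leq C'\|f\|_X\|g\|_X$ follows from the boundedness of $B_{\gamma,q}$ and the a priori bound $\|u_f\|_{H^s(\R^n)}\leq C\|f\|_X$ coming from Lax--Milgram. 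The only mildly delicate point is bookkeeping the potential: one must use that any $q\in M_{\gamma_0/\delta_0,+}$ splits as $q_1+q_2$ with the quantitative bound on $\|q_1\|_s$ and nonnegativity of $q_2$, so that the coercivity constant is genuinely positive; everything else is routine once Theorem~\ref{thm: Poinc Unbounded Doms} supplies the Poincaré inequality on domains bounded in one direction.
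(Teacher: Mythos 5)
Your proposal is correct and follows essentially the same route as the paper: decompose $q=q_1+q_2$ with $q_1\in M_{\gamma_0/\delta_0}$ and $q_2\in M_+$, use the fractional Poincaré inequality on $\widetilde{H}^s(\Omega)$ (Theorem~\ref{thm: Poinc Unbounded Doms}) to absorb the $\|q_1\|_s$ contribution and obtain coercivity with constant $\gamma_0/\delta_0-\|q_1\|_s>0$, invoke Lax--Milgram, and then verify well-definedness and boundedness of $\Lambda_{\gamma,q}$ on the quotient $X$ by testing against two representatives. The only cosmetic difference is that you repackage the Poincaré step into a single inequality $\|u\|_{H^s}^2\le\delta_0\|(-\Delta)^{s/2}u\|_{L^2}^2$ before subtracting, whereas the paper splits $\gamma_0[u]_{H^s}^2$ in half and feeds Poincaré into one summand inside the chained estimate; both yield the same constant $\gamma_0/\delta_0$.
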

    
    \begin{remark}  
        In the above theorem and everywhere else in this article, we write $f$ instead of $[f]$ for elements of the trace space $X$. Let us note that on the right hand side of the formula \eqref{eq: DN map opt tom eq}, the function $g$ can be any representative of its equivalence class $[g]$. 
    \end{remark}
    
    \begin{proof}
        \ref{item 1 well-posedness opt tom eq}: First, let us note that the bilinear form $B_{\gamma}$ is continuous on $H^s(\R^n)$ and that any Sobolev multiplier $q\in M(H^s\to H^{-s})$ by the multiplier estimate \eqref{multiplier-estimate} induces a continuous bilinear form on $H^s(\R^n)$. Hence, $B_{\gamma,q}\colon H^s(\R^n)\times H^s(\R^n)\to \R$ is continuous. Moreover, as $q\in M_{\gamma_0/\delta,+}(H^s\to H^{-s})$ we may decompose $q$ as $q=q_1+q_2$, where $q_1\in M_{\gamma_0/\delta_0}(H^s\to H^{-s})$ and $q_2\in M_+(H^s\to H^{-s})$. Therefore, we can calculate
        \begin{equation}
        \label{eq: coercivity estimate potential q}
            \begin{split}
                B_{\gamma,q}(u,u)&\geq \gamma_0[u]_{H^s(\R^n)}^2+\langle q_1u,u\rangle+\langle q_2u,u\rangle\\
                &\geq \frac{\gamma_0}{2}\left([u]_{H^s(\R^n)}^2+C_{opt}^{-1}\|u\|_{L^2(\R^n)}^2\right)-|\langle q_1u,u\rangle|\\
                &\geq \frac{\gamma_0}{2\max(1,C_{opt})}\|u\|_{H^s(\R^n)}^2-\|q_1\|_{s}\|u\|_{H^s(\R^n)}^2\\
                &\geq (\gamma_0/\delta_0-\|q_1\|_{s})\|u\|_{H^s(\R^n)}^2=\alpha\|u\|_{H^s(\R^n)}^2
            \end{split}
        \end{equation}
        for any $u\in \widetilde{H}^s(\Omega)$, where we used the (optimal) fractional Poincar\'e inequality (see~Theorem~\ref{thm: Poinc Unbounded Doms} and eq.~\eqref{eq: optimal fractional Poincare constant}). Using the fact that $q_1\in M_{\gamma_0/\delta_0}(H^s\to H^{-s})$, we deduce $\alpha>0$ and hence the bilinear form $B_{\gamma,q}$ is coercive over $\widetilde{H}^s(\Omega)$.

        Next note that for given $f\in H^s(\R^n)$, the function $u\in H^s(\R^n)$ solves \eqref{eq: nonlocal opt tomography problem} if and only if $v=u-f\in H^s(\R^n)$ solves
        \begin{equation}
        \label{eq: hom nonlocal opt tomography problem}
        \begin{split}
            L_{\gamma}^sv+qv&= F\quad\text{in}\quad\Omega,\\
            v&= 0\quad\text{in}\quad\Omega_e
        \end{split}
        \end{equation}
        with $F=-(L_{\gamma}^sf+qf)\in (\widetilde{H}^s(\Omega))^*$. Now since $B_{\gamma,q}$ is a continuous, coercive bilinear form the Lax--Milgram theorem implies that \eqref{eq: hom nonlocal opt tomography problem} has a unique solution $v\in \widetilde{H}^s(\Omega)$ and so the same holds for \eqref{eq: nonlocal opt tomography problem}. Next, we show that if $f_1,f_2\in H^s(\R^n)$ satisfy $f_1-f_2\in\widetilde{H}^s(\Omega)$ then $u_{f_1}=u_{f_2}$ in $\R^n$, where $u_{f_j}\in H^s(\R^n)$, $j=1,2$, is the unique solution to \eqref{eq: nonlocal opt tomography problem} with exterior value $f_j$. Define $v=u_{f_1}-u_{f_2}\in \widetilde{H}^s(\Omega)$. Then $v$ solves 
        \begin{equation}
        \label{eq: uniqueness trace space}
        \begin{split}
            L_{\gamma}^sv+qv&= 0\quad\text{in}\quad\Omega,\\
            v&= 0\quad\text{in}\quad\Omega_e.
        \end{split}
        \end{equation}
        By testing \eqref{eq: uniqueness trace space} with $v$ and using the coercivity of $B_{\gamma,q}$ over $\widetilde{H}^s(\Omega)$, it follows that $v=0$ in $\R^n$. Hence, for any $f\in X$, there is a unique solution $u_f\in H^s(\R^n)$.
        
        \noindent \ref{item 2 DN map opt tom eq}: For any $f\in X$, let us define $\Lambda_{\gamma,q}f$ via the formula \eqref{eq: DN map opt tom eq}, where $g\in H^s(\R^n)$ is any representative of the related equivalence class in $X$. First, we verify that this map is well-defined. If $h\in H^s(\R^n)$ is any other representative, that is $g-h\in \widetilde{H}^s(\Omega)$, then since $u_f$ solves \eqref{eq: nonlocal opt tomography problem} we have
        \[
            B_{\gamma,q}(u_f,g)=B_{\gamma,q}(u_f,g-h)+B_{\gamma,q}(u_f,h)=B_{\gamma,q}(u_f,h)
        \]
        and so the expression for $\langle \Lambda_{\gamma,q}f,g\rangle$ is unambiguous. By the continuity of the bilinear form $B_{\gamma,q}$ it is easily seen that $\Lambda_{\gamma,q}f\in X^*$ for any $f\in X$.
    \end{proof}
    
    \begin{theorem}[Well-posedness and DN map for fractional Schr\"odinger equation]
    \label{thm: well-posedness for fractional Schrödinger type equation}
        Let $\Omega\subset \R^n$ be an open set which is bounded in one direction and $0<s<\min(1,n/2)$. Moreover, assume that the uniformly elliptic diffusion $\gamma\in L^{\infty}(\R^n)$ with lower bound $\gamma_0>0$ satisfies $m_{\gamma}\in H^{s,n/s}(\R^n)$ and the potential $q$ belongs to $M_{\gamma_0/\delta_0,+}(H^s\to H^{-s})$. Then the following assertions hold:
    \begin{enumerate}[(i)]
        \item\label{item 1 well-posedness schrödinger} The distribution $Q_{\gamma,q}$ defined by 
        \begin{equation}
        \label{eq: reduced potential}
            Q_{\gamma,q}=-\frac{(-\Delta)^sm_{\gamma}}{\gamma^{1/2}}+\frac{q}{\gamma}
        \end{equation}
        belongs to $M(H^s\to H^{-s})$.
        \item\label{item 2 well-posedness schrödinger} If $u\in H^s(\R^n)$, $f\in X$ and $v\vcentcolon =\gamma^{1/2}u,\,g\vcentcolon =\gamma^{1/2}f$, then $v\in H^s(\R^n), g\in X$ and $u$ is a solution of \eqref{eq: nonlocal opt tomography problem}
        if and only if $v$ is a weak solution of the fractional Schr\"odinger equation
        \begin{equation}
        \label{eq: FSE well-posedness schrödinger}    
            \begin{split}
            ((-\Delta)^s+Q_{\gamma,q})v&=0\quad\text{in}\quad\Omega,\\
            v&=g\quad\text{in}\quad\Omega_e.
        \end{split}
        \end{equation}
        \item\label{item 3 well-posedness schrödinger} Conversely, if $v\in H^s(\R^n), g\in X$ and $u\vcentcolon =\gamma^{-1/2}v,\,f\vcentcolon =\gamma^{-1/2}g$, then $v$ is a weak solution of \eqref{eq: FSE well-posedness schrödinger}  if and only if $u$ is a weak solution of \eqref{eq: nonlocal opt tomography problem}.
        \item\label{item 4 well-posedness schrödinger} For all $f\in X$ there is a unique weak solution $v_g\in H^s(\R^n)$ of the fractional Schr\"odinger equation \eqref{eq: FSE well-posedness schrödinger}.
        \item\label{item 5 well-posedness schrödinger} The exterior DN map $\Lambda_{Q_{\gamma,q}}\colon X\to X^*$ given by 
        \begin{equation}
        \label{eq: well-defined DN map Schrödinger}
        \begin{split}
            \langle \Lambda_{Q_{\gamma,q}}f,g\rangle\vcentcolon =B_{Q_{\gamma,q}}(v_f,g),
        \end{split}
        \end{equation}
        where $v_f\in H^s(\R^n)$ is the unique solution to \eqref{eq: FSE well-posedness schrödinger} with exterior value $f$, is a well-defined bounded linear map. 
    \end{enumerate}
    \end{theorem}
    
    \begin{proof}
        \ref{item 1 well-posedness schrödinger}: Since $q\in M(H^s\to H^{-s})$, we can estimate 
        \begin{equation}
        \label{eq: continuity estimate potential}
        \begin{split}
            |\langle q/\gamma u,v\rangle|&=|\langle q (\gamma^{-1/2}u),\gamma^{-1/2}v\rangle|\\
            &\leq \|q\|_s\|\gamma^{-1/2}u\|_{H^s(\R^n)}\|\gamma^{-1/2}v\|_{H^s(\R^n)}\\
            &\leq \|q\|_s\|(1-\frac{m_{\gamma}}{m_{\gamma}+1})u\|_{H^s(\R^n)}\|(1-\frac{m_{\gamma}}{m_{\gamma}+1})v\|_{H^s(\R^n)}\\
            &\leq C\|q\|_s\|u\|_{H^s(\R^n)}\|v\|_{H^s(\R^n)}
        \end{split}
        \end{equation}
        for all $u,v\in H^s(\R^n)$, where we used that the assertion \ref{estimate mathcal M} of Lemma~\ref{Lemma: fractional Liouville reduction} implies $\gamma^{-1/2}w\in H^s(\R^n)$ for all $w\in H^s(\R^n)$ with $\|\frac{m_{\gamma}}{m_{\gamma}+1}w\|_{H^s(\R^n)}\leq C\|w\|_{H^s(\R^n)}$ for some constant $C>0$ only depending polynomially on the $L^{\infty}$ and $H^{s,n/s}$ norm of $\frac{m_{\gamma}}{m_{\gamma}+1}$. Now, the estimate \eqref{eq: continuity estimate potential} can be used to see that $q/\gamma$ is a distribution and belongs to $M(H^s\to H^{-s})$. On the other hand, by the statement \ref{potential frac Liouville reduction} of Lemma~\ref{Lemma: fractional Liouville reduction} we know that $q_{\gamma}=\frac{(-\Delta)^sm_{\gamma}}{\gamma^{1/2}}\in M(H^s\to H^{-s})$. This in turn implies $Q_{\gamma}\in M(H^s\to H^{-s})$.
        
        \noindent\ref{item 2 well-posedness schrödinger}: The assertions $v\in H^s(\R^n)$, $g\in X$ and $u-f\in \widetilde{H}^s(\Omega)$ if and only if $v-g\in \widetilde{H}^s(\Omega)$ are direct consequences of the property \ref{estimate mathcal M} of Lemma~\ref{Lemma: fractional Liouville reduction}. Furthermore, the fact that $u$ solves $L_{\gamma}^su+qu=0$ in $\Omega$ if and only if $v$ solves $(-\Delta)^sv+Q_{\gamma,q}=0$ in $\Omega$ follows by the definition of $Q_{\gamma,q}$, \ref{liouville red identity} and \ref{estimate mathcal M} of Lemma~\ref{Lemma: fractional Liouville reduction}.
        
        \noindent\ref{item 3 well-posedness schrödinger}: The proof of this fact is essentially the same as for \ref{item 2 well-posedness schrödinger} and therefore we drop it.
        
       \noindent\ref{item 4 well-posedness schrödinger}: By \ref{item 3 well-posedness schrödinger}, we know that $v\in H^s(\R^n)$ solves \eqref{eq: FSE well-posedness schrödinger}  if and only if $u$ solves \eqref{eq: nonlocal opt tomography problem} with exterior value $f=\gamma^{1/2}g$. The latter Dirichlet problem is well-posed by Theorem~\ref{thm: well-posedness opt tom eq} and hence it follows from \ref{item 2 well-posedness schrödinger} and \ref{item 2 well-posedness schrödinger} that the unique solution of \eqref{eq: FSE well-posedness schrödinger}  is given by $v_g=\gamma^{1/2}u_{\gamma^{-1/2}g}\in H^s(\R^n)$.
        
        \noindent\ref{item 5 well-posedness schrödinger}: The fact that $\Lambda_{Q_{\gamma,q}}$ defined via formula \eqref{eq: well-defined DN map Schrödinger} is well-defined follows from the properties \ref{item 4 well-posedness schrödinger}, \ref{item 1 well-posedness schrödinger} and the same calculation as in the proof of Theorem~\ref{thm: well-posedness opt tom eq}, \ref{item 2 DN map opt tom eq}.
    \end{proof}
    
    \begin{remark}
    \label{remark: interior source problem}
        Let us note that essentially the same proofs as in Theorem~\ref{thm: well-posedness opt tom eq} and \ref{thm: well-posedness for fractional Schrödinger type equation}, can be used to show that
        \[
        \begin{split}
            L_{\gamma}^su+qu&= F\quad\text{in}\quad\Omega,\\
            u&= u_0\quad\text{in}\quad\Omega_e
        \end{split}
        \]
        and
        \[
            \begin{split}
            ((-\Delta)^s+Q_{\gamma,q})v&=G\quad\text{in}\quad\Omega,\\
            v&=v_0\quad\text{in}\quad\Omega_e.
        \end{split}
        \]
        for all $u_0,v_0\in H^s(\R^n)$ and $F,G\in(\widetilde{H}^s(\Omega))^*$ are well-posed.
    \end{remark}

    \section{Inverse problem}
    \label{sec: inverse problem}
    
    In Section~\ref{sec: uniquness} we first prove Theorem~\ref{main thm} and hence providing an answer to Question~\ref{question uniqueness}. We establish this result in four steps. First, in Section~\ref{subsec: exterior reconstruction} we extend the exterior determination result of the fractional conductivity equation to the nonlocal tomography equation (Theorem~\ref{thm: exterior reconstruction}). Then in Lemma~\ref{lemma: relation of sols} we show that $\gamma_1^{1/2}u_f^{(1)}$ and $\gamma_2^{1/2}u_f^{(2)}$ coincide in $\R^n$ whenever $\gamma_1=\gamma_2$, $q_1=q_2$ in the measurement set and generate the same DN data. These two preparatory steps then allow us to prove that the diffusion coefficients are the same in $\R^n$ (Section~\ref{subsec: determination of diffusion coeff}) and to conclude that in that case also the absorption coefficients are necessarily identical (Section~\ref{subsubsec: equality of q}). Then in Section~\ref{sec: non-uniqueness}, we provide an answer to Question~\ref{question non-uniqueness}. Following a similar strategy as in \cite{RZ2022LowReg}, we first derive a characterization of the uniqueness in the inverse problem for the nonlocal optical tomography equation and then use this to construct counterexamples to uniqueness when the potentials are non-equal in the measurement set (see~Theorem~\ref{thm: non uniqueness}).
    
    \subsection{Uniqueness}
    \label{sec: uniquness}

    \subsubsection{Exterior reconstruction formula}
    \label{subsec: exterior reconstruction}
    
    The main result of this section is the following reconstruction formula in the exterior.
    
    \begin{theorem}[Exterior reconstruction formula]
    \label{thm: exterior reconstruction}
        Let $\Omega\subset \R^n$ be an open set which is bounded in one direction, $W\subset\Omega_e$ a measurement set and $0<s<\min(1,n/2)$. Assume that the uniformly elliptic diffusion $\gamma\in L^{\infty}(\R^n)$, which is bounded from below by $\gamma_0>0$, and the potential $q\in M_{\gamma_0/\delta_0,+}(H^s\to H^{-s})$ satisfy the following additional properties
        \begin{enumerate}[(i)]
            \item\label{prop 1 reconstruction} $\gamma$ is a.e. continuous in $W$
            \item\label{prop 2 reconstruction} and $q\in L^p_{loc}(W)$ for some $\frac{n}{2s}<p\leq \infty$.
        \end{enumerate}
        Then for a.e. $x_0\in W$ there exists a sequence $(\Phi_N)_{N\in\N}\subset C_c^{\infty}(W)$ such that
        \begin{equation}
        \label{eq: reconstruction formula}
            \gamma(x_0)=\lim_{N\to\infty}\langle \Lambda_{\gamma,q}\Phi_N,\Phi_N\rangle.
        \end{equation}
    \end{theorem}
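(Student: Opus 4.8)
The plan is to construct, for a.e.\ $x_0\in W$, a sequence of rescaled bumps $\Phi_N\in C_c^\infty(W)$ concentrating at $x_0$ and to read off $\gamma(x_0)$ from the limit of $\langle\Lambda_{\gamma,q}\Phi_N,\Phi_N\rangle$ through the bilinear form $B_{\gamma,q}$. Fix $x_0\in W$ at which $\gamma$ is continuous (this holds for a.e.\ $x_0$ by \ref{prop 1 reconstruction}), choose $\rho_0>0$ with $\overline{B_{\rho_0}(x_0)}\subset W$, so that $q\in L^p(B_{\rho_0}(x_0))$ by \ref{prop 2 reconstruction} and, since $W\subset\Omega_e=\R^n\setminus\overline{\Omega}$ with $\overline{\Omega}$ closed, $d_0\vcentcolon =\dist(\overline{B_{\rho_0}(x_0)},\overline{\Omega})>0$. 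Fix once and for all a nonzero $\phi\in C_c^\infty(\R^n)$ with $\supp\phi\subset\overline{B_{1/2}(0)}$, and for $\rho_N\to 0^+$ put
\[
    \Phi_N(x)\vcentcolon =\frac{\rho_N^{-(n-2s)/2}}{\|\phi\|_{\dot{H}^s(\R^n)}}\,\phi\!\Big(\frac{x-x_0}{\rho_N}\Big),
\]
where $\|\cdot\|_{\dot{H}^s}=\|(-\Delta)^{s/2}\cdot\|_{L^2}$; then $\|\Phi_N\|_{\dot{H}^s(\R^n)}=1$ for all $N$. A change of variables gives $\|\Phi_N\|_{L^2(\R^n)}^2=c\,\rho_N^{2s}\to 0$ and $\|\Phi_N\|_{L^1(\R^n)}=c'\,\rho_N^{(n+2s)/2}\to 0$ (with $c,c'$ depending only on $\phi$), while $\|\Phi_N\|_{H^s(\R^n)}$ stays bounded; moreover $\supp\Phi_N\subset\overline{B_{\rho_N/2}(x_0)}\subset W$ for $N$ large, so $\Phi_N\in C_c^\infty(W)$.

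Write $u_{\Phi_N}=\Phi_N+r_N$, $r_N\in\widetilde{H}^s(\Omega)$, for the solution of \eqref{eq: nonlocal opt tomography problem} with exterior data $\Phi_N$ provided by Theorem~\ref{thm: well-posedness opt tom eq}. Since $B_{\gamma,q}$ is symmetric and $B_{\gamma,q}(u_{\Phi_N},\varphi)=0$ for all $\varphi\in\widetilde{H}^s(\Omega)$, expanding $B_{\gamma,q}(u_{\Phi_N},u_{\Phi_N})$ yields
\[
    \langle\Lambda_{\gamma,q}\Phi_N,\Phi_N\rangle=B_{\gamma,q}(u_{\Phi_N},u_{\Phi_N})=B_{\gamma,q}(\Phi_N,\Phi_N)-B_{\gamma,q}(r_N,r_N),\qquad B_{\gamma,q}(r_N,r_N)=-B_{\gamma,q}(\Phi_N,r_N).
\]
The crucial point is that $\supp\Phi_N$ and $\supp r_N\subset\overline{\Omega}$ are separated by distance $\ge d_0$ for $N$ large. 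Hence $\langle q\Phi_N,r_N\rangle=0$ (approximate $r_N$ in $H^s(\R^n)$ by functions in $C_c^\infty(\Omega)$, whose product with $\Phi_N$ vanishes); the two ``diagonal'' contributions $\Phi_Nr_N$ in the double integral \eqref{eq: bilinear frac cond eq} defining $B_\gamma(\Phi_N,r_N)$ vanish a.e.; and on the off-diagonal set $\{x\in\supp\Phi_N,\ y\in\overline{\Omega}\}$ one has $|x-y|\ge d_0$, so Cauchy--Schwarz in $y$ gives $|B_{\gamma,q}(r_N,r_N)|=|B_\gamma(\Phi_N,r_N)|\le C(n,s,d_0,\|\gamma\|_{L^\infty})\,\|\Phi_N\|_{L^1}\,\|r_N\|_{L^2}$. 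Combined with the coercivity bound $B_{\gamma,q}(r_N,r_N)\ge\alpha\|r_N\|_{H^s}^2$ from \eqref{eq: coercivity estimate potential q} (valid since $r_N\in\widetilde{H}^s(\Omega)$) this forces $\|r_N\|_{H^s}\le(C/\alpha)\|\Phi_N\|_{L^1}\to 0$, and therefore $B_{\gamma,q}(r_N,r_N)\to 0$.

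It remains to show $B_{\gamma,q}(\Phi_N,\Phi_N)=B_\gamma(\Phi_N,\Phi_N)+\langle q\Phi_N,\Phi_N\rangle\to\gamma(x_0)$. Since $\supp\Phi_N\subset B_{\rho_0}(x_0)$ where $q\in L^p$, the multiplier pairing is $\langle q\Phi_N,\Phi_N\rangle=\int_{\R^n}q\,\Phi_N^2\,dx$, so H\"older's inequality and the embedding $H^s(\R^n)\hookrightarrow L^{2^*}(\R^n)$, $2^*=\tfrac{2n}{n-2s}$, give $|\langle q\Phi_N,\Phi_N\rangle|\le\|q\|_{L^p(B_{\rho_0}(x_0))}\,\|\Phi_N\|_{L^{2p'}(\R^n)}^2$; as $\tfrac n{2s}<p\le\infty$ forces $2\le 2p'<2^*$, interpolating between $\|\Phi_N\|_{L^2}\to 0$ and the bounded $\|\Phi_N\|_{L^{2^*}}$ shows $\langle q\Phi_N,\Phi_N\rangle\to 0$ (trivially so if $p=\infty$). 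For the leading term, insert $\gamma^{1/2}(x)\gamma^{1/2}(y)=\gamma(x_0)+[\gamma^{1/2}(x)\gamma^{1/2}(y)-\gamma(x_0)]$ into \eqref{eq: bilinear frac cond eq}; using $\frac{C_{n,s}}{2}\int_{\R^{2n}}\frac{(\Phi_N(x)-\Phi_N(y))^2}{|x-y|^{n+2s}}\,dxdy=\|\Phi_N\|_{\dot{H}^s(\R^n)}^2=1$ this becomes
\[
    B_\gamma(\Phi_N,\Phi_N)=\gamma(x_0)+E_N,\qquad E_N\vcentcolon =\frac{C_{n,s}}{2}\int_{\R^{2n}}\big[\gamma^{1/2}(x)\gamma^{1/2}(y)-\gamma(x_0)\big]\frac{(\Phi_N(x)-\Phi_N(y))^2}{|x-y|^{n+2s}}\,dxdy.
\]

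Finally, $E_N\to 0$. Its integrand is supported where $x$ or $y$ lies in $\supp\Phi_N\subset\overline{B_{\rho_N/2}(x_0)}$, and we split it into two pieces. On the near-diagonal piece $\{|x-x_0|<\rho_N,\ |y-x_0|<\rho_N\}$ the remainder is $\lesssim\omega(\rho_N)$, where $\omega(\rho)\vcentcolon =\sup_{|z-x_0|\le\rho}|\gamma^{1/2}(z)-\gamma(x_0)^{1/2}|\to 0$ as $\rho\to 0$ by continuity of $\gamma$ (hence of $\gamma^{1/2}$) at $x_0$, and the remaining integral $\frac{C_{n,s}}{2}\int\frac{(\Phi_N(x)-\Phi_N(y))^2}{|x-y|^{n+2s}}$ over this piece is $\le\|\Phi_N\|_{\dot{H}^s(\R^n)}^2=1$, so this piece is $\lesssim\omega(\rho_N)\to 0$. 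On the complementary piece either $|y-x_0|\ge\rho_N$, which forces $x\in\supp\Phi_N$, $\Phi_N(y)=0$ and $|x-y|\ge\tfrac12|y-x_0|$ (since $|x-x_0|\le\rho_N/2\le\tfrac12|y-x_0|$), or the symmetric situation; in the first case we split once more at $|y-x_0|=\epsilon$ for a fixed small $\epsilon>0$: on $\rho_N\le|y-x_0|<\epsilon$ the remainder is $\lesssim\omega(\epsilon)$ and $\int_{|y-x_0|\ge\rho_N}|x-y|^{-n-2s}\,dy\le C(n,s)\,\rho_N^{-2s}$, giving a contribution $\lesssim\omega(\epsilon)\,\rho_N^{-2s}\|\Phi_N\|_{L^2}^2\lesssim\omega(\epsilon)$, while on $|y-x_0|\ge\epsilon$ the remainder is $\le 2\|\gamma\|_{L^\infty}$ and $\int_{|y-x_0|\ge\epsilon}|x-y|^{-n-2s}\,dy\le C(n,s)\,\epsilon^{-2s}$, giving a contribution $\le C(n,s)\,\epsilon^{-2s}\|\gamma\|_{L^\infty}\|\Phi_N\|_{L^2}^2\to 0$. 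Hence $\limsup_N|E_N|\lesssim\omega(\epsilon)$ for every $\epsilon>0$, i.e.\ $E_N\to 0$, and so
\[
    \langle\Lambda_{\gamma,q}\Phi_N,\Phi_N\rangle=\gamma(x_0)+E_N+\langle q\Phi_N,\Phi_N\rangle-B_{\gamma,q}(r_N,r_N)\longrightarrow\gamma(x_0),
\]
which is \eqref{eq: reconstruction formula}. The main obstacle is precisely this last estimate on $E_N$: because for a compactly supported bump the ``one point inside, one point outside'' portion of $\|\Phi_N\|_{\dot{H}^s}^2$ does not vanish, one must show that it nevertheless carries the weight $\gamma(x_0)$ up to an error controlled by $\omega(\epsilon)$. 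By comparison, the solvability input, the support-separation argument, the coercivity bound, and the $L^p$ estimates on the potential are routine.
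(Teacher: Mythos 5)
Your proof is correct and follows essentially the same strategy as the paper: concentrate test functions at $x_0$, split the solution as $u_N = \Phi_N + r_N$ with $r_N \in \widetilde{H}^s(\Omega)$, show $r_N$ is negligible using the $L^1$ control on $\Phi_N$ together with support separation and coercivity, kill the potential term by a H\"older/interpolation argument, and read off $\gamma(x_0)$ from $B_\gamma(\Phi_N,\Phi_N)$. The difference is that the paper outsources two ingredients you derive from scratch: it invokes \cite[Theorem~1.1]{KLZ22FracpLap} for the existence of a concentrating sequence with $[\Phi_N]_{H^s}=1$, $\Phi_N \to 0$ in $H^t$ for $t<s$ and shrinking support, whereas you build it explicitly by rescaling a fixed bump and verify the required decays ($L^2$, $L^1$) by direct scaling; and it cites \cite[Theorem~3.2]{RZ2022LowReg} for the convergence $B_\gamma(\Phi_N,\Phi_N) \to \gamma(x_0)$, whereas you prove it via the three-region splitting (near-diagonal, intermediate annulus up to radius $\epsilon$, far field) of the error term $E_N$. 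Your algebraic identity $\langle\Lambda\Phi_N,\Phi_N\rangle = B_{\gamma,q}(\Phi_N,\Phi_N) - B_{\gamma,q}(r_N,r_N)$ is an equivalent rearrangement of the paper's $E_{\gamma,q}(u_N-\Phi_N) + 2B_{\gamma,q}(u_N-\Phi_N,\Phi_N) + E_{\gamma,q}(\Phi_N)$. What your version buys is self-containment and transparency on where the continuity hypothesis on $\gamma$ enters; what the paper's buys is brevity by reusing a concentration lemma already proved in greater generality (for the fractional $p$-Laplacian). One small remark: your observation that the ``one inside, one outside'' portion of the Gagliardo double integral is not negligible and must still carry the weight $\gamma(x_0)$ is exactly the subtle point, and your $\epsilon$-splitting handles it cleanly.
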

    
    Before giving the proof of this result, we prove the following interpolation estimate:
    
    \begin{lemma}[Interpolation estimate for the potential term]\label{lem: cont potential term}
        Let $0 < s  < \min(1,n/2)$ and assume $W\subset\R^n$ is a non-empty open set. If $q\in M(H^s\to H^{-s})\cap L^p_{loc}(W)$ for some $\frac{n}{2s} < p \le \infty$, then for any $V\Subset W$ the following estimate holds
        \begin{equation}
        \label{eq: potential goes to zero estimate}
            |\langle qu,v\rangle|\leq C \|u\|^{1-\theta}_{H^s(\R^n)}\|u\|_{L^2(V)}^{\theta} \|v\|_{H^s(\R^n)}
        \end{equation}
        for all $u,v\in C_c^{\infty}(V)$ and some $C>0$, where $\theta\in (0,1]$ is given by 
        \begin{equation}
            \theta=\begin{cases}
                2-\frac{n}{sp},&\enspace\text{if}\enspace \frac{n}{2s}<p\leq \frac{n}{s}, \\
                1,&\enspace\text{otherwise}.
            \end{cases}
        \end{equation}
    \end{lemma}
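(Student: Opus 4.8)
The plan is to turn the distributional pairing $\langle qu,v\rangle$ into an honest Lebesgue integral and then estimate it by Hölder's inequality, exploiting the critical Sobolev embedding $H^s(\R^n)\hookrightarrow L^{2^*}(\R^n)$ and log-convexity (interpolation) of Lebesgue norms on the \emph{bounded} set $V$.

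First I would record the reduction step. Since $V\Subset W$ and $q\in L^p_{loc}(W)$, the restriction $q|_V$ lies in $L^p(V)$; and for $u,v\in C_c^\infty(V)$ we have $uv\in C_c^\infty(V)\subset C_c^\infty(W)$, so by the definition of the multiplication map $m_q$ (namely $\langle qu,v\rangle=\langle q,uv\rangle$) and the fact that $q$ coincides with an $L^p_{loc}$ function on $W$,
\[
\langle qu,v\rangle=\langle q,uv\rangle=\int_V q\,uv\,dx .
\]
Everything afterwards is an estimate of this integral, and there are two regimes governed by the size of $p$.

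In the regime $p>n/s$ (where $\theta=1$) I would apply Hölder with exponents $p$, $2$ and $c$, where $\tfrac1c=\tfrac12-\tfrac1p$. Using $n>2s$ (so $n/s>2$) and $p>n/s$ one checks $2\le c<2^*$, hence the boundedness of $V$ gives $\|v\|_{L^c(V)}\le C\|v\|_{L^{2^*}(V)}\le C\|v\|_{H^s(\R^n)}$; together with $\|u\|_{L^2(V)}$ appearing directly in the Hölder estimate, this is exactly the claimed bound with $\theta=1$. In the regime $\tfrac{n}{2s}<p\le\tfrac ns$ I would instead apply Hölder with exponents $p$, $a$ and $2^*$, where $\tfrac1a=\tfrac12+\tfrac sn-\tfrac1p$; the hypotheses $\tfrac{n}{2s}<p\le\tfrac ns$ force $2\le a<2^*$. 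The factor $\|v\|_{L^{2^*}(V)}$ is controlled by $C\|v\|_{H^s(\R^n)}$ via the Sobolev embedding, and the factor $\|u\|_{L^a(V)}$ is split by interpolation, $\|u\|_{L^a(V)}\le\|u\|_{L^2(V)}^{\theta}\|u\|_{L^{2^*}(V)}^{1-\theta}$, where the relation $\tfrac1a=\tfrac\theta2+\tfrac{1-\theta}{2^*}$ pins down the weight as precisely $\theta=2-\tfrac{n}{sp}$ (and $\theta\in(0,1]$ exactly because $\tfrac{n}{2s}<p\le\tfrac ns$); finishing with $\|u\|_{L^{2^*}(V)}\le C\|u\|_{H^s(\R^n)}$ yields the estimate.

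I do not expect a genuine obstacle. The only things requiring care are the bookkeeping of the two Hölder triples — verifying that $a$ and $c$ land in the ranges where $L^{2^*}(V)\hookrightarrow L^a(V)$, resp. $L^{2^*}(V)\hookrightarrow L^c(V)$, and the $L^2$--$L^{2^*}$ interpolation, are legitimate — and, in the reduction step, justifying that the $M(H^s\to H^{-s})$-pairing agrees with $\int_V q\,uv\,dx$ for smooth compactly supported $u,v$. Conceptually, the loss $\|u\|_{H^s(\R^n)}^{1-\theta}$ is nothing but the price of interpolating the $L^a(V)$ norm of $u$ between its $L^2(V)$ and $L^{2^*}(V)$ norms at the exponent dictated by the integrability exponent $p$.
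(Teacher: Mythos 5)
Your argument is correct and follows essentially the same route as the paper: Hölder to peel off $\|v\|_{L^{2^*}(V)}\lesssim\|v\|_{H^s}$, then Hölder in $p$ combined with $L^2$--$L^{2^*}$ interpolation for $u$, with the exponent bookkeeping forcing $\theta=2-\tfrac{n}{sp}$. The only cosmetic difference is that you handle $p>n/s$ as a separate Hölder triple, whereas the paper disposes of it up front by a ``without loss of generality $p\le n/s$'' reduction (legitimate since $V$ is bounded, so $L^p_{\mathrm{loc}}(W)\subset L^{n/s}_{\mathrm{loc}}(W)$ there).
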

    \begin{proof}
    Without loss of generality we can assume that there holds $\frac{n}{2s}<p\leq \frac{n}{s}$. First, by H\"older's inequality and Sobolev's embedding we have 
    \begin{equation}
    \label{eq: first estimate}
        |\langle qu,v\rangle|\leq \|qu\|_{L^{\frac{2n}{n+2s}}(V)}\|v\|_{L^{\frac{2n}{n-2s}}(V)}\leq C\|qu\|_{L^{\frac{2n}{n+2s}}(V)}\|v\|_{H^s(\R^n)}.
    \end{equation}
    Next, observe that if $\theta=2-\frac{n}{sp}\in (0,1]$, then there holds
    \[
        \frac{n+2s}{2n}=\frac{1}{p}+\frac{1-\theta}{\frac{2n}{n-2s}}+\frac{\theta}{2}.
    \]
    Therefore, by interpolation in $L^q$ and Sobolev's embedding we can estimate
    \begin{equation}
    \label{eq: second estimate}
    \begin{split}
        \|qu\|_{L^{\frac{2n}{n+2s}}(V)}&\leq \|q\|_{L^p(V)}\|u\|_{L^{\frac{2n}{n-2s}}(V)}^{1-\theta}\|u\|_{L^2(V)}^{\theta}\\
        &\leq C\|q\|_{L^p(V)}\|u\|_{H^s(\R^n)}^{1-\theta}\|u\|_{L^2(V)}^{\theta}.
    \end{split}
    \end{equation}
    Combining the estimates \eqref{eq: first estimate} and \eqref{eq: second estimate}, we obtain \eqref{eq: potential goes to zero estimate}.

    \end{proof}
    
    \begin{proof}[Proof of Theorem~\ref{thm: exterior reconstruction}]
        Let $x_0\in W$ be such that $\gamma$ is continuous at $x_0$. By \cite[Theorem~1.1]{KLZ22FracpLap}, there exists a sequence $(\Phi_N)_{N\in\N}\subset C_c^{\infty}(W)$ satisfying the following conditions:
        \begin{enumerate}[(i)]
            \item\label{support cond} $\supp(\Phi_N)\to \{x_0\}$ as $N\to\infty$,
            \item\label{normalization cond} $[\Phi_N]_{H^s(\R^n)}=1$ for all $N\in\N$
            \item\label{convergence cond} and $\Phi_N\to 0$ in $H^t(\R^n)$ as $N\to\infty$ for all $0\leq t<s$.
        \end{enumerate}
        The last condition implies that $\Phi_N\to 0$ in $L^p(\R^n)$ for all $1\leq p <\frac{2n}{n-2s}$ as $N\to\infty$. Next, let $u_N\in H^s(\R^n)$ be the unique solution to
        \begin{equation}
        \begin{split}
            L^s_{\gamma}u+qu&= 0\quad\enspace\text{in}\enspace\Omega,\\
            u&= \Phi_N\,\enspace\text{in}\enspace\Omega_e.
        \end{split}
        \end{equation}
        By linearity $v_N\vcentcolon = u_N-\Phi_N\in \widetilde{H}^s(\Omega)$ is the unique solution to
        \begin{equation}
        \label{eq: sol hom ext cond}
        \begin{split}
            L_{\gamma}^s v+qv&= -B_{\gamma,q}(\Phi_N,\cdot)\enspace\text{in}\enspace\Omega,\\
            v&= 0\quad\,\,\,\quad\quad\quad\quad\text{in}\enspace\Omega_e.
        \end{split}
        \end{equation}
        One easily sees that $B_{\gamma,q}(\Phi_N,\cdot)\in (\widetilde{H}^s(\Omega))^*$. Similarly as in \cite[Lemma~3.1]{RZ2022LowReg}, for any $v\in \widetilde{H}^s(\Omega)$ we may calculate
        \allowdisplaybreaks
        \[
        \begin{split}
            &|B_{\gamma,q}(\Phi_N,v)|=|B_{\gamma}(\Phi_N,v)|=C\left|\int_{W\times\Omega}\gamma^{1/2}(x)\gamma^{1/2}(y)\frac{\Phi_N(x)v(y)}{|x-y|^{n+2s}}\,dxdy\right|\\
            &\quad\leq C\int_{\Omega}\gamma^{1/2}(y)|v(y)|\left(\int_W\frac{\gamma^{1/2}(x)|\Phi_N(x)|}{|x-y|^{n+2s}}\,dx\right)dy\\
            &\quad\leq C\|\gamma\|_{L^{\infty}(\Omega\cup W)}\|v\|_{L^2(\Omega)}\left\|\int_W\frac{|\Phi_N(x)|}{|x-y|^{n+2s}}\,dx\right\|_{L^2(\Omega)}\\
            &\quad\leq C\|\gamma\|_{L^{\infty}(\Omega\cup W)}\|v\|_{L^2(\Omega)}\int_W|\Phi_N(x)|\left(\int_{\Omega}\frac{dy}{|x-y|^{n+2s}}\,dy\right)^{1/2}\,dx\\
            &\quad\leq C\|\gamma\|_{L^{\infty}(\Omega\cup W)}\|v\|_{L^2(\Omega)}\int_W|\Phi_N(x)|\left(\int_{(B_r(x))^c}\frac{dy}{|x-y|^{n+2s}}\,dy\right)^{1/2}\,dx\\
            &\quad\leq \frac{C}{r^{\frac{n+4s}{2}}}\|\gamma\|_{L^{\infty}(\R^n)}\|v\|_{L^2(\Omega)}\|\Phi_N\|_{L^1(W)}.
        \end{split}
        \]
        In the above estimates we used that $\gamma\in L^{\infty}(\R^n)$ is uniformly elliptic, $\supp(\Phi_N)\subset \supp(\Phi_1)\Subset W$ (see~\ref{support cond}), H\"older's and Minkowski's inequality and set $r\vcentcolon = \dist(\Omega,\supp(\Phi_1))>0$. This implies
        \begin{equation}
        \label{eq: bounded on norm}
            \|B_{\gamma,q}(\Phi_N,\cdot)\|_{(\widetilde{H}^s(\Omega))^*}\leq \frac{C}{r^{\frac{n+4s}{2}}}\|\gamma\|_{L^{\infty}(\R^n)}\|\Phi_N\|_{L^1(W)}.
        \end{equation}
        Now, testing equation \eqref{eq: sol hom ext cond} by $v_N\in\widetilde{H}^s(\Omega)$, using the fractional Poincar\'e inequality (see~Theorem~\ref{thm: Poinc Unbounded Doms}), the uniform ellipticity of $\gamma$ and the coercivity estimate \eqref{eq: coercivity estimate potential q}, we get
        \[
        \begin{split}
            \|v_N\|_{H^s(\R^n)}^2&\leq C|B_{\gamma,q}(\Phi_N,v_N)|\leq C\|B_{\gamma,q}(\Phi_N,\cdot)\|_{(\widetilde{H}^s(\Omega))^*}\|v_N\|_{H^s(\R^n)}\\
            &\leq \frac{C}{r^{\frac{n+4s}{2}}}\|\gamma\|_{L^{\infty}(\R^n)}\|\Phi_N\|_{L^1(W)}\|v_N\|_{H^s(\R^n)},
        \end{split}
        \]
        which in turn implies
        \[
            \|v_N\|_{H^s(\R^n)}\leq \frac{C}{r^{\frac{n+4s}{2}}}\|\gamma\|_{L^{\infty}(\R^n)}\|\Phi_N\|_{L^1(W)}.
        \]
        Recalling that $v_N=u_N-\Phi_N$ and the property \ref{convergence cond} of the sequence $\Phi_N\in C_c^{\infty}(W)$, we deduce
        \begin{equation}
        \label{eq: conv to zero of diff}
            \|u_N-\Phi_N\|_{H^s(\R^N)}\to 0\quad\text{as}\quad N\to\infty.
        \end{equation}
        Let us next define the energy
        \[
            E_{\gamma,q}(v)\vcentcolon = B_{\gamma,q}(v,v)
        \]
        for any $v\in H^s(\R^n)$. Using the computation in the proof of \cite[Theorem~3.2]{RZ2022LowReg} we have
        \begin{equation}
        \label{eq: concentration of energy}
            \begin{split}
                \lim_{N\to\infty}E_{\gamma,q}(\Phi_N)&=\lim_{N\to\infty}B_{\gamma}(\Phi_N,\Phi_N)+\lim_{N\to\infty}\langle q\Phi_N,\Phi_N\rangle_{L^2(\R^n)}\\
                &=\lim_{N\to\infty}B_{\gamma}(\Phi_N,\Phi_N)=\gamma(x_0)
            \end{split}
        \end{equation}
        where we used Lemma~\ref{lem: cont potential term} and the properties \ref{normalization cond}, \ref{convergence cond} of the sequence $(\Phi_N)_{N\in\N}$ to see that the term involving the potential $q$ vanishes.
        On the other hand, we can rewrite the DN map as follows
        \[
        \begin{split}
            \langle \Lambda_{\gamma,q}\Phi_N,\Phi_N\rangle &=B_{\gamma,q}(u_N,\Phi_N)=B_{\gamma,q}(u_N,u_N)\\
            &=E_{\gamma,q}(u_N-\Phi_N)+2B_{\gamma,q}(u_N-\Phi_N,\Phi_N)+E_{\gamma,q}(\Phi_N).
        \end{split}
        \]
        Thus, arguing as above for the convergence $E_{\gamma,q}(\Phi_N)\to\gamma(x_0)$, we see that the first two terms on the right hand side vanish in the limit $N\to\infty$ and we can conclude the proof.
    \end{proof}

    \subsubsection{Uniqueness of the diffusion coefficient $\gamma$}
    \label{subsec: determination of diffusion coeff}
    
    \begin{lemma}[Relation of solutions]
    \label{lemma: relation of sols}
        Let $\Omega\subset \R^n$ be an open set which is bounded in one direction, suppose $W_1,W_2\subset\Omega_e$ are two measurement sets and $0<s<\min(1,n/2)$. Assume that the uniformly elliptic diffusions $\gamma,\gamma_2\in L^{\infty}(\R^n)$ with lower bound $\gamma_0>0$ satisfy $m_{\gamma_1},m_{\gamma_2}\in H^{s,n/s}(\R^n)$ and the potentials $q_1,q_2$ belong to $M_{\gamma_0/\delta_0,+}(H^s\to H^{-s})$. If $\gamma_1|_{W_2} = \gamma_2|_{W_2}$ and $\Lambda_{\gamma_1, q_1} f|_{W_2} = \Lambda_{\gamma_2, q_2} f|_{W_2}$ for some $f\in \widetilde{H}^s(W_1)$ with $W_2\setminus \supp(f)\neq \emptyset$, then there holds
        \begin{equation}
            \gamma_1^{1/2}u_f^{(1)} = \gamma_2^{1/2}u_f^{(2)}\enspace \text{a.e.\@ in }\R^n
        \end{equation}
        where, for $j = 1, 2$, $u_f^{(j)}\in H^s(\R^n)$ is the unique solution of \begin{equation}
        \label{eq: NOTE relation solutions}
        \begin{split}
            L_{\gamma_j}^su+q_ju&= 0\quad\text{in}\quad\Omega,\\
            u&= f\quad\text{in}\quad\Omega_e
        \end{split}
        \end{equation}
        (see~Theorem~\ref{thm: well-posedness opt tom eq}).
    \end{lemma}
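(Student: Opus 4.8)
The plan is to transport the problem to the fractional Schr\"odinger picture via the fractional Liouville reduction and then invoke the strong unique continuation principle (UCP) of the fractional Laplacian on the set $W':=W_2\setminus\supp(f)$, which is nonempty and open precisely by the hypothesis $W_2\setminus\supp(f)\neq\emptyset$. Concretely, I would set $v^{(j)}:=\gamma_j^{1/2}u_f^{(j)}$ and $g^{(j)}:=\gamma_j^{1/2}f$ for $j=1,2$. By the well-posedness result for the fractional Schr\"odinger equation (Theorem~\ref{thm: well-posedness for fractional Schrödinger type equation}), each $v^{(j)}\in H^s(\R^n)$ is the unique weak solution of $((-\Delta)^s+Q_j)v^{(j)}=0$ in $\Omega$ with exterior value $g^{(j)}$, where $Q_j:=Q_{\gamma_j,q_j}$. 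Since $W'\subset\Omega_e$ and $W'\cap\supp(f)=\emptyset$, we have $v^{(j)}=g^{(j)}=\gamma_j^{1/2}f=0$ on $W'$, so $w:=v^{(1)}-v^{(2)}$ vanishes on $W'$ and the whole task reduces to showing $(-\Delta)^sw=0$ in $W'$, after which the UCP yields $w\equiv 0$ and hence the claim.

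To obtain the equation for $w$ on $W'$ I would first record the DN-map identity coming from the Liouville reduction: combining Lemma~\ref{Lemma: fractional Liouville reduction}\ref{liouville red identity} with the definitions of $B_{q}$ and of $Q_{\gamma_j,q_j}$ gives $B_{\gamma_j,q_j}(u,\varphi)=B_{Q_j}(\gamma_j^{1/2}u,\gamma_j^{1/2}\varphi)$ for all $u,\varphi\in H^s(\R^n)$, and therefore
\[
    \langle\Lambda_{\gamma_j,q_j}f,\varphi\rangle=\langle\Lambda_{Q_j}g^{(j)},\gamma_j^{1/2}\varphi\rangle .
\]
Now fix $\psi\in C_c^\infty(W')$ and insert $\varphi:=\gamma_1^{-1/2}\psi$, which lies in $\widetilde{H}^s(W')\subset\widetilde{H}^s(W_2)$ by Lemma~\ref{Lemma: fractional Liouville reduction}\ref{estimate mathcal M}. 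Since $\gamma_1=\gamma_2$ on $W_2\supset\supp\psi$, we have $\gamma_1^{1/2}\varphi=\gamma_2^{1/2}\varphi=\psi$, so the hypothesis $\Lambda_{\gamma_1,q_1}f|_{W_2}=\Lambda_{\gamma_2,q_2}f|_{W_2}$ turns into $\langle\Lambda_{Q_1}g^{(1)},\psi\rangle=\langle\Lambda_{Q_2}g^{(2)},\psi\rangle$. This is exactly where the assumption $\gamma_1|_{W_2}=\gamma_2|_{W_2}$ enters.

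Finally, since $v^{(j)}$ vanishes on the open set $W'$ containing $\supp\psi$ and multiplication by a Sobolev multiplier is a local operation, one has $\langle Q_jv^{(j)},\psi\rangle=0$, whence
\[
    \langle(-\Delta)^sv^{(j)},\psi\rangle=\langle(-\Delta)^{s/2}v^{(j)},(-\Delta)^{s/2}\psi\rangle_{L^2(\R^n)}=B_{Q_j}(v^{(j)},\psi)=\langle\Lambda_{Q_j}g^{(j)},\psi\rangle .
\]
Subtracting the identities for $j=1,2$ and using the previous paragraph gives $\langle(-\Delta)^sw,\psi\rangle=0$ for every $\psi\in C_c^\infty(W')$, i.e.\ $(-\Delta)^sw=0$ in $W'$. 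Together with $w=0$ in $W'$ and $w\in H^s(\R^n)$, the UCP of the fractional Laplacian forces $w\equiv 0$ in $\R^n$, that is $\gamma_1^{1/2}u_f^{(1)}=\gamma_2^{1/2}u_f^{(2)}$ a.e.\ in $\R^n$.

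I do not expect a genuine obstacle; the argument is a weighted version of the exterior-data step for the fractional conductivity equation. The points that need a few lines of care are: (a) the locality claim $\langle Q_jv^{(j)},\psi\rangle=0$, which I would prove by approximating $v^{(j)}$ by $\chi_k\in C_c^\infty(\R^n)$ in $H^s(\R^n)$, noting $\chi_k\psi\to v^{(j)}\psi=0$ in $H^s(\R^n)$ and $\langle Q_j\chi_k,\psi\rangle=\langle Q_j,\chi_k\psi\rangle\to 0$; (b) the bookkeeping of the weights $\gamma_j^{1/2}$ that makes the two Schr\"odinger exterior data $\gamma_j^{1/2}\varphi$ agree on $W'$; and (c) the admissibility of $\varphi=\gamma_1^{-1/2}\psi$ as a test function, which is Lemma~\ref{Lemma: fractional Liouville reduction}\ref{estimate mathcal M}. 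The only external input beyond these manipulations is the fractional-Laplacian UCP.
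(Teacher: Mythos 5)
Your proof reaches the same conclusion by what is essentially the paper's route: test the DN-map identity against functions supported in $W'=W_2\setminus\supp(f)$, show that $(-\Delta)^s$ of the difference $w=\gamma_1^{1/2}u_f^{(1)}-\gamma_2^{1/2}u_f^{(2)}$ vanishes weakly on $W'$ while $w|_{W'}=0$, then apply the fractional UCP. The difference is bookkeeping: you pass explicitly through the Schr\"odinger picture and localize the reduced potential $Q_j$, whereas the paper stays in the conductivity picture and uses the observation that when $u_f^{(j)}$ and $\varphi$ have (essentially) disjoint supports, $B_{\gamma_j,q_j}(u_f^{(j)},\varphi)$ collapses directly to $\langle(-\Delta)^{s/2}(\gamma_j^{1/2}u_f^{(j)}),(-\Delta)^{s/2}(\gamma_j^{1/2}\varphi)\rangle_{L^2}$, packaging in one line both the vanishing of the $q$-term and the Liouville identity. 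Both routes rely on the same underlying locality fact for Sobolev multipliers.

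One step in your sketch needs a slightly sharper argument. In (a) you claim $\langle Q_j,\chi_k\psi\rangle\to 0$ from $\chi_k\psi\to v^{(j)}\psi=0$ in $H^s(\R^n)$. This does not follow as stated, because $Q_j\in M(H^s\to H^{-s})$ does not mean $Q_j\in H^{-s}(\R^n)$; the multiplier bound \eqref{multiplier-estimate} controls $|\langle Q_j,ab\rangle|$ by $\|a\|_{H^s}\|b\|_{H^s}$ only through a \emph{product} factorization, and the natural factorization $a=\chi_k$, $b=\psi$ gives a bound that does not decay. The fix is to insert a cutoff $\eta\in C_c^\infty(W')$ with $\eta\equiv 1$ on $\supp\psi$ and write $\langle Q_j,\chi_k\psi\rangle=\langle Q_j,(\chi_k\eta)\psi\rangle=\langle Q_j(\chi_k\eta),\psi\rangle$; since multiplication by $\eta$ is bounded on $H^s(\R^n)$ and $\chi_k\eta\to v^{(j)}\eta=0$ in $H^s(\R^n)$, the multiplier estimate now yields $\langle Q_j(\chi_k\eta),\psi\rangle\to 0$. (Equivalently, factor $\chi_k\psi=(\chi_k\psi)\cdot\eta$ and use $\|\chi_k\psi\|_{H^s}\to 0$.) With this amendment the proof is complete and correct.
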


    \begin{proof}
        First let $\gamma,q$ satisfy the assumptions of Lemma~\ref{lemma: relation of sols} and assume that $f,g\in H^s(\R^n)$ have disjoint support. Then there holds
        \begin{equation}
        \label{eq: disjoint support}
            \begin{split}
                B_{\gamma,q}(f,g)&=B_{\gamma}(f,g)=C_{n,s}\int_{\R^{2n}}\gamma^{1/2}(x)\gamma^{1/2}(y)\frac{f(x)g(y)}{|x-y|^{n+2s}}\,dxdy\\
                &=\langle (-\Delta)^{s/2}(\gamma^{1/2}f),(-\Delta)^{s/2}(\gamma^{1/2}g)\rangle_{L^2(\R^n)}.
            \end{split}
        \end{equation}
        Now, let $f\in \widetilde{H}^s(W_1)$ and $u_f^{(j)}\in H^s(\R^n)$ for $j=1,2$ be as in the statement of the lemma. Set $V\vcentcolon = W_2\setminus\supp(f)$ and take any $\varphi\in \widetilde{H}^s(V)$. Then we have $\supp(u_f^{(j)})\cap \supp(\varphi)=\emptyset$ and the assumption that the DN maps coincide implies
        \[
            \begin{split}
                B_{\gamma_1,q_1}(u_f^{(1)},\varphi)=\langle \Lambda_{\gamma_1,q_1}f,\varphi\rangle =\langle \Lambda_{\gamma_2,q_2}f,\varphi\rangle=B_{\gamma_2,q_2}(u_f^{(2)},\varphi).
            \end{split}
        \]
        By \eqref{eq: disjoint support} and the assumption $\gamma_1=\gamma_2$ on $W_2$, this is equivalent to
        \[
            \langle (-\Delta)^{s/2}(\gamma_1^{1/2}u_f^{(1)}-\gamma_2^{1/2}u_f^{(2)}),(-\Delta)^{s/2}(\gamma_1^{1/2}\varphi)\rangle_{L^2(\R^n)}=0
        \]
        for all $\varphi\in \widetilde{H}^s(V)$. By our assumptions on the diffusion coefficients $\gamma_j$ and Lemma~\ref{Lemma: fractional Liouville reduction}, we can replace $\varphi$ by $g=\gamma_1^{-1/2}\varphi$ to obtain
        \[
            \langle (-\Delta)^{s/2}(\gamma_1^{1/2}u_f^{(1)}-\gamma_2^{1/2}u_f^{(2)}),(-\Delta)^{s/2}g\rangle_{L^2(\R^n)}=0
        \]
        for all $g\in \widetilde{H}^s(V)$. We know that $\gamma_1^{1/2}u_f^{(1)}-\gamma_2^{1/2}u_f^{(2)}=0$ on $V$ as $u_f^{(j)}=0$ on $V$. Therefore, Lemma~\ref{Lemma: fractional Liouville reduction} and the usual UCP for the fractional Laplacian for $H^s$ functions implies $\gamma_1^{1/2}u_f^{(1)}=\gamma_2^{1/2}u_f^{(2)}$ a.e. in $\R^n$.
    \end{proof}
    
     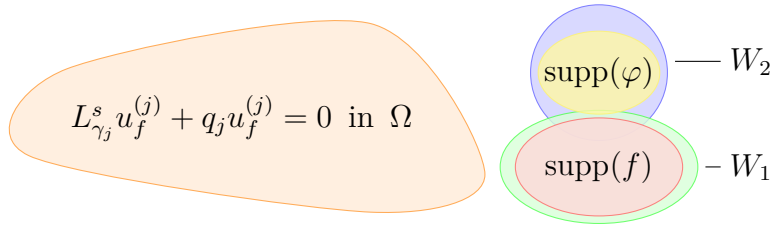
\begin{figure}[!ht]
    \centering
    \begin{tikzpicture}
    \filldraw[color=blue!50, fill=blue!15, xshift=11cm, yshift=1.35cm] (3,-2.5) ellipse (0.9 and 0.9);   
    \node[xshift=13cm, yshift=1.5cm] at (3,-2.5) {$\raisebox{-.35\baselineskip}{\ensuremath{W_2}}$};
    \filldraw[color=green!70, fill=green!15, xshift=11cm, yshift=0.1cm, opacity=0.8] (3,-2.5) ellipse (1.3 and 0.75);   
    \filldraw[color=red!60, fill=red!15, xshift=11cm, yshift=0.1cm, opacity=0.8] (3,-2.5) ellipse (1.1 and 0.65);  
    \node[xshift=11cm, yshift=0.1cm] at (3,-2.5) {$\raisebox{-.35\baselineskip}{\ensuremath{\supp(f)}}$};
    \node[xshift=13cm, yshift=0.1cm] at (3,-2.5) {$\raisebox{-.35\baselineskip}{\ensuremath{W_1}}$};
    \draw[xshift=13cm, yshift=0.1cm] (2.4,-2.5) -- (2.6,-2.5);
    \draw[xshift=13cm, yshift=0.1cm] (2,-1.1) -- (2.6,-1.1);
    \filldraw[color=yellow!70, fill=yellow!45, xshift=11cm, yshift=1.35cm, opacity=0.8] (3,-2.5) ellipse (0.8 and 0.55);  
    \node[xshift=11cm, yshift=1.35cm] at (3,-2.5) {$\raisebox{-.35\baselineskip}{\ensuremath{\supp(\varphi)}}$};
    \filldraw [color=orange!80, fill = orange!15, xshift=8cm, yshift=-2cm,opacity=0.8] plot [smooth cycle] coordinates {(-1,0.9) (3,1.5) (4.5,-0.5) (3,-1) (-1.5,-0.25)};
    \node[xshift=3cm] at (6.3,-1.75) {$\raisebox{-.35\baselineskip}{\ensuremath{L_{\gamma_j}^su_f^{(j)}+q_ju_f^{(j)}=0\enspace\text{in}\enspace\Omega}}$};
\end{tikzpicture}
\caption{\begin{small} A graphical illustration of the sets and functions used in the proof of Lemma~\ref{lemma: relation of sols}. \end{small}}
\end{figure}

    \begin{theorem}[Uniqueness of $\gamma$]
    \label{thm: uniqueness of gamma}
        Let $0 < s < \min(1,n/2)$, suppose $\Omega\subset \R^n$ is a domain bounded in one direction and let $W_1,W_2\subset \Omega$ be two non-disjoint measurement sets. Assume that the diffusions $\gamma_1, \gamma_2\in L^{\infty}(\R^n)$ with background deviations $m_{\gamma_1},m_{\gamma_2}\in H^{s,n/s}(\R^n)$ and potentials $q_1,q_2\in \distr(\R^n)$ satisfy
        \begin{enumerate}[(i)]
            \item\label{uniform ellipticity diffusions} $\gamma_1,\gamma_2$ are uniformly elliptic with lower bound $\gamma_0>0$,
            \item\label{continuity diffusions} $\gamma_1, \gamma_2$ are a.e. continuous in $W_1\cap W_2$,
            \item\label{integrability potentials} $q_1,q_2\in M_{\gamma_0/\delta_0,+}(H^s\to H^{-s})\cap L^p_{loc}(W_1\cap W_2)$ for $\frac{n}{2s}< p\leq \infty$
            \item\label{equal potentials in measurement sets} and $q_1|_{W_1\cap W_2}=q_2|_{W_1\cap W_2}$.
        \end{enumerate} 
        If $\Lambda_{\gamma_1,q_1}f|_{W_2}=\Lambda_{\gamma_2,q_2}f|_{W_2}$ for all $f\in C_c^{\infty}(W_1)$, then there holds $\gamma_1=\gamma_2$ in $\R^n$.
    \end{theorem}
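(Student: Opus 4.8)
The plan is to combine, in this order, the exterior reconstruction formula (Theorem~\ref{thm: exterior reconstruction}), the relation of solutions (Lemma~\ref{lemma: relation of sols}), the fractional Liouville reduction and a Runge approximation argument, following the strategy of \cite{RGZ2022GlobalUniqueness,RZ2022LowReg}. Write $W\vcentcolon=W_1\cap W_2$, a nonempty open subset of $\Omega_e$. First I would apply Theorem~\ref{thm: exterior reconstruction} to \emph{both} pairs $(\gamma_1,q_1)$ and $(\gamma_2,q_2)$ with the measurement set $W$; its hypotheses \ref{prop 1 reconstruction}--\ref{prop 2 reconstruction} are precisely \ref{continuity diffusions} and \ref{integrability potentials}. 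The key observation is that the concentrating sequence $(\Phi_N)_{N\in\N}\subset C_c^\infty(W)$ furnished by \cite[Theorem~1.1]{KLZ22FracpLap} depends only on the base point $x_0\in W$ and on $W$, not on the coefficients, and that $\Phi_N\in C_c^\infty(W)\subset C_c^\infty(W_1)$ is supported in $W\subset W_2$, so that $\langle\Lambda_{\gamma_j,q_j}\Phi_N,\Phi_N\rangle$ depends only on the DN data restricted to $W_2$. Hence the assumption $\Lambda_{\gamma_1,q_1}f|_{W_2}=\Lambda_{\gamma_2,q_2}f|_{W_2}$ yields $\langle\Lambda_{\gamma_1,q_1}\Phi_N,\Phi_N\rangle=\langle\Lambda_{\gamma_2,q_2}\Phi_N,\Phi_N\rangle$ for every $N$, and passing to the limit gives $\gamma_1(x_0)=\gamma_2(x_0)$ for a.e.\ $x_0\in W$, i.e.\ $\gamma_1=\gamma_2$ a.e.\ in $W$.

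With this, I would invoke Lemma~\ref{lemma: relation of sols} applied with both measurement sets of that lemma replaced by $W$: this is admissible since $C_c^\infty(W)\subset C_c^\infty(W_1)$ (so the DN data still agree on $W\subset W_2$, and extend to $f\in\widetilde H^s(W)$ by density) and since $\gamma_1=\gamma_2$ on $W$. Thus for every $f\in C_c^\infty(W)$ with $W\setminus\supp(f)\neq\emptyset$ — a family dense in $\widetilde H^s(W)$ — the function $v_f\vcentcolon=\gamma_1^{1/2}u_f^{(1)}=\gamma_2^{1/2}u_f^{(2)}$ is well defined on $\R^n$ and, by assertion \ref{estimate mathcal M} of Lemma~\ref{Lemma: fractional Liouville reduction}, lies in $H^s(\R^n)$ with exterior value the common function $\gamma_1^{1/2}f=\gamma_2^{1/2}f$ (here $\gamma_1=\gamma_2$ and $q_1=q_2$ on $W\supset\supp f$). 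By the fractional Liouville reduction — concretely the identity $B_{\gamma_j,q_j}(u,\varphi)=B_{Q_{\gamma_j,q_j}}(\gamma_j^{1/2}u,\gamma_j^{1/2}\varphi)$ underlying Theorem~\ref{thm: well-posedness for fractional Schrödinger type equation} — the single function $v_f$ is simultaneously a weak solution of $((-\Delta)^s+Q_{\gamma_1,q_1})v=0$ and of $((-\Delta)^s+Q_{\gamma_2,q_2})v=0$ in $\Omega$. Subtracting the two weak formulations gives $\langle(Q_{\gamma_1,q_1}-Q_{\gamma_2,q_2})v_f,\psi\rangle=0$ for all $\psi\in\widetilde H^s(\Omega)$ and all such $f$.

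Finally, since the coercivity estimate \eqref{eq: coercivity estimate potential q} transfers to the reduced equations, $0$ is not an eigenvalue of either $(-\Delta)^s+Q_{\gamma_j,q_j}$ on $\widetilde H^s(\Omega)$, and the Runge approximation property for the fractional Schrödinger equation — which follows from the unique continuation principle of $(-\Delta)^s$ and a Hahn--Banach argument as in \cite{GSU20,RZ2022unboundedFracCald,RZ2022LowReg} — shows that $\{v_f|_\Omega\}$ is dense in the relevant function space, whence $Q_{\gamma_1,q_1}=Q_{\gamma_2,q_2}$ in $\Omega$. Expanding $Q_{\gamma_j,q_j}=-(-\Delta)^sm_{\gamma_j}/\gamma_j^{1/2}+q_j/\gamma_j$, cross-multiplying by $\gamma_1^{1/2}\gamma_2^{1/2}$ and using \ref{equal potentials in measurement sets}, a short computation turns this equality into a fractional Schrödinger-type equation for $w\vcentcolon=m_{\gamma_1}-m_{\gamma_2}$ in $\Omega$; combining it with $w=0$ on $W$, the unique continuation principle of the fractional Laplacian for $H^s$ functions (as in \cite{RGZ2022GlobalUniqueness,RZ2022LowReg}, cf.\ also Theorem~\ref{thm: uniqueness q}) forces $w\equiv0$, i.e.\ $\gamma_1=\gamma_2$ in $\R^n$. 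The two points I expect to require real care are the Runge approximation step — the potentials being only Sobolev multipliers, the approximation must be carried out in $H^s(\Omega)$ rather than merely in $L^2(\Omega)$ — and the concluding ``de-Liouville'' step, where the equality of the reduced potentials in $\Omega$ must be combined with the equality of the diffusions on $W$ in order to pin down enough exterior information on $w$ for the unique continuation principle to apply.
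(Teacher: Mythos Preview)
Your first two steps---exterior reconstruction giving $\gamma_1=\gamma_2$ on $W$, and Lemma~\ref{lemma: relation of sols} giving $v_f\vcentcolon=\gamma_1^{1/2}u_f^{(1)}=\gamma_2^{1/2}u_f^{(2)}$---match the paper. The gap is in the final ``de-Liouville'' step. The identity $Q_{\gamma_1,q_1}=Q_{\gamma_2,q_2}$ in $\Omega$ that you extract via Runge reads
\[
-\frac{(-\Delta)^sm_{\gamma_1}}{\gamma_1^{1/2}}+\frac{q_1}{\gamma_1}=-\frac{(-\Delta)^sm_{\gamma_2}}{\gamma_2^{1/2}}+\frac{q_2}{\gamma_2}\quad\text{in }\Omega,
\]
and here $q_1,q_2$ are \emph{unknown in $\Omega$}: assumption~\ref{equal potentials in measurement sets} says $q_1=q_2$ only on $W\subset\Omega_e$, which is disjoint from $\Omega$, so you cannot cross-multiply the $q_j$ away and isolate an equation in $w=m_{\gamma_1}-m_{\gamma_2}$ alone. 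Moreover, even a clean Schr\"odinger equation for $w$ in $\Omega$ together with $w=0$ on the disjoint set $W$ is not the configuration to which the UCP of $(-\Delta)^s$ applies; that principle requires $w$ and $(-\Delta)^sw$ to vanish on the \emph{same} open set. (The analogue in \cite{RGZ2022GlobalUniqueness,RZ2022LowReg} works because there one has $\gamma_1=\gamma_2$ on all of $\Omega_e$, so $w\in\widetilde H^s(\Omega)$ and one concludes by \emph{well-posedness}, not UCP---unavailable here since you only know $w=0$ on the small set $W$.)

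The paper avoids the interior altogether. It evaluates $\langle(\Lambda_{\gamma_1,q_1}-\Lambda_{\gamma_2,q_2})f,f\rangle=0$ for $f\in\widetilde H^s(V)$, $V\Subset W$, expands via the Liouville reduction, kills the cross term $\langle(-\Delta)^{s/2}(\gamma_1^{1/2}u_f^{(1)}-\gamma_2^{1/2}u_f^{(2)}),(-\Delta)^{s/2}(\gamma_1^{1/2}f)\rangle$ by Lemma~\ref{lemma: relation of sols} (applied with measurement sets $V$ and $W\setminus\overline V$) and kills $\langle(q_1-q_2)f,f\rangle$ by~\ref{equal potentials in measurement sets}. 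After polarization and stripping off the $\gamma_1^{1/2}$ factors one is left with $(-\Delta)^s(m_{\gamma_1}-m_{\gamma_2})=0$ on some $U\Subset W$; since also $m_{\gamma_1}-m_{\gamma_2}=0$ on $U$, the UCP for $(-\Delta)^s$ in $H^{s,n/s}$ (\cite[Theorem~2.2]{KRZ2022Biharm}) finishes. Your route can be repaired by additionally proving $Q_{\gamma_1,q_1}=Q_{\gamma_2,q_2}$ in $W$ (e.g.\ via \cite[Corollary~2.7]{RZ2022unboundedFracCald} for the reduced Schr\"odinger DN maps), where both $\gamma_1=\gamma_2$ and $q_1=q_2$ do hold and the identity collapses to $(-\Delta)^sw=0$; but then the interior Runge detour is no longer needed.
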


    \begin{proof}
        Let $W\vcentcolon = W_1\cap W_2$. Then Theorem~\ref{thm: exterior reconstruction} ensures that $\gamma_1=\gamma_2$ on $W$. Next choose $V\Subset W$ and let $f\in \widetilde{H}^s(V)$. By assumption there holds 
        \[
        \begin{split}
            0 &=\langle (\Lambda_{\gamma_1,q_1}-\Lambda_{\gamma_2,q_2} )f,f\rangle =B_{\gamma_1,q_1}(u_f^{(1)},f)-B_{\gamma_2,q_2}(u_f^{(2)},f)\\
            &=B_{\gamma_1}(u_f^{(1)},f)-B_{\gamma_2}(u_f^{(2)},f)+\langle (q_1-q_2)f,f\rangle\\
            &=\langle (-\Delta)^{s/2}(\gamma_1^{1/2}u_f^{(1)}),(-\Delta)^{s/2}(\gamma_1^{1/2}f)\rangle_{L^2(\R^n)}\\
            &\quad-\left\langle \frac{(-\Delta)^sm_{\gamma_1}}{\gamma_1^{1/2}}\gamma_1^{1/2}u_f^{(1)},\gamma_1^{1/2}f\right\rangle\\
            &\quad +\langle (-\Delta)^{s/2}(\gamma_2^{1/2}u_f^{(2)}),(-\Delta)^{s/2}(\gamma_2^{1/2}f)\rangle_{L^2(\R^n)}\\
            &\quad-\left\langle \frac{(-\Delta)^sm_{\gamma_2}}{\gamma_2^{1/2}}\gamma_2^{1/2}u_f^{(2)},\gamma_2^{1/2}f\right\rangle\\
            &\quad +\langle (q_1-q_2)f,f\rangle\\
            &=\left\langle \frac{(-\Delta)^s(m_{\gamma_2}-m_{\gamma_1})}{\gamma_1^{1/2}}\gamma_1^{1/2}f,\gamma_1^{1/2}f\right\rangle+\langle (q_1-q_2)f,f\rangle\\
            &\quad +\langle (-\Delta)^{s/2}(\gamma_1^{1/2}u_f^{(1)}-\gamma_2^{1/2}u_f^{(2)}),(-\Delta)^{s/2}(\gamma_1^{1/2}f)\rangle_{L^2(\R^n)},
        \end{split}
        \]
        where in the fourth equality sign we used the fractional Liouville reduction (Lemma~\ref{Lemma: fractional Liouville reduction}, \ref{liouville red identity}) and in the fifth equality sign $\gamma_1=\gamma_2$ in $W$. By Lemma~\ref{lemma: relation of sols} with $W_1=V$ and $W_2=W\setminus\overline{V}$, the term in the last line vanishes. Moreover, since $q_1=q_2$ in $W$, the term involving the potentials is zero as well. Using the polarization identity, we deduce that there holds
        \[
            \left\langle \frac{(-\Delta)^s(m_{\gamma_2}-m_{\gamma_1})}{\gamma_1^{1/2}}\gamma_1^{1/2}f,\gamma_1^{1/2}g\right\rangle=0
        \]
        for all $f,g\in \widetilde{H}^s(V)$.  In particular, by first changing $f\mapsto \gamma_1^{-1/2}f\in \widetilde{H}^s(V)$ and $g\mapsto \gamma_1^{-1/2}g\in \widetilde{H}^s(V)$ (see~Lemma~\ref{Lemma: fractional Liouville reduction}, \ref{estimate mathcal M}) and then selecting $U\Subset V$, $g\in C_c^{\infty}(V)$ with $0\leq g\leq 1$, $g|_{\overline{U}}=1$, this implies 
        \[
            \left\langle (-\Delta)^s(m_{\gamma_2}-m_{\gamma_1}),\gamma_1^{-1/2}f\right\rangle=0
        \]
        for all $f\in \widetilde{H}^s(U)$. Using again the assertion \ref{estimate mathcal M} of Lemma~\ref{Lemma: fractional Liouville reduction}, we deduce
        \[
            \left\langle (-\Delta)^s(m_{\gamma_2}-m_{\gamma_1}),f\right\rangle=0
        \]
        for all $f\in \widetilde{H}^s(U)$. Hence, $m=m_{\gamma_2}-m_{\gamma_1}\in H^{s,n/s}(\R^n)$ satisfies
        \[
            (-\Delta)^sm=m=0\quad \text{in}\quad U.
        \]
        Now, the UCP for the fractional Laplacian in $H^{s,n/s}(\R^n)$ (see~\cite[Theorem~2.2]{KRZ2022Biharm}) guarantees that $\gamma_1=\gamma_2$ in $\R^n$.
    \end{proof}

    \subsubsection{Uniqueness of the potential $q$}
    \label{subsubsec: equality of q}
    
    In this section, we finally establish the uniqueness assertion in Theorem~\ref{main thm}. In fact, under the given assumptions of Theorem~\ref{main thm}, Theorem~\ref{thm: uniqueness of gamma} implies $\gamma_1=\gamma_2$ in $\R^n$. The next theorem now ensures that there also holds $q_1=q_2$ in $\Omega$.
    
    \begin{theorem}
    \label{thm: uniqueness q}
        Let $0 < s < \min(1,n/2)$, suppose $\Omega\subset \R^n$ is a domain bounded in one direction and let $W_1,W_2\subset \Omega$ be two non-disjoint measurement sets. Assume that the diffusions $\gamma_1, \gamma_2\in L^{\infty}(\R^n)$ with background deviations $m_{\gamma_1},m_{\gamma_2}\in H^{s,n/s}(\R^n)$ and potentials $q_1,q_2\in \distr(\R^n)$ satisfy
        \begin{enumerate}[(i)]
            \item\label{uniform ellipticity diffusions} $\gamma_1,\gamma_2$ are uniformly elliptic with lower bound $\gamma_0>0$,
            \item\label{continuity diffusions} $\gamma_1, \gamma_2$ are a.e. continuous in $W_1\cap W_2$,
            \item\label{integrability potentials} $q_1,q_2\in M_{\gamma_0/\delta_0,+}(H^s\to H^{-s})\cap L^p_{loc}(W_1\cap W_2)$ for $\frac{n}{2s}< p\leq \infty$
            \item\label{equal potentials in measurement sets} and $q_1|_{W_1\cap W_2}=q_2|_{W_1\cap W_2}$.
        \end{enumerate}  
        If $\Lambda_{\gamma_1,q_1}f|_{W_2}=\Lambda_{\gamma_2,q_2}f|_{W_2}$ for all $f\in C_c^{\infty}(W_1)$, then there holds $q_1=q_2$ in $\Omega$.
    \end{theorem}

    \begin{proof}
        Note that by Theorem~\ref{thm: uniqueness of gamma} we already know that the condition on the DN maps implies $\gamma_1=\gamma_2$ in $\R^n$. Now, we first show that the fractional conductivity operator $L_{\gamma}^s$ has the UCP on $H^s(\R^n)$ as long as $m_{\gamma}\in H^{s,n/s}(\R^n)$. For this purpose, assume that $V\subset\R^n$ is a nonempty, open set and $u\in H^s(\R^n)$ satisfies $L_{\gamma}^su=u=0$ in $V$. By the fractional Liouville reduction (Lemma~\ref{Lemma: fractional Liouville reduction}, \ref{liouville red identity}) and $u|_V=0$, there holds
        \[
        \begin{split}
            0&=\langle L_{\gamma}^su,\varphi\rangle \\
            &=\langle (-\Delta)^{s/2}(\gamma^{1/2}u),(-\Delta)^{s/2}(\gamma^{1/2}\varphi)\rangle_{L^2(\R^n)}-\left\langle \frac{(-\Delta)^sm_{\gamma}}{\gamma}\gamma^{1/2} u,\gamma^{1/2}\varphi \right\rangle\\
            &=\langle (-\Delta)^{s/2}(\gamma^{1/2}u),(-\Delta)^{s/2}(\gamma^{1/2}\varphi)\rangle_{L^2(\R^n)}
        \end{split}
        \]
        for any $\varphi\in C_c^{\infty}(V)$. By approximation the above identity holds for all $\varphi\in \widetilde{H}^s(V)$. By the property \ref{estimate mathcal M} of Lemma~\ref{Lemma: fractional Liouville reduction}, we can replace $\varphi\in \widetilde{H}^s(V)$ by $\psi=\gamma^{-1/2}\varphi\in \widetilde{H}^s(V)$ to see that $(-\Delta)^{s/2}(\gamma^{1/2}u)=0$ in $V$. Now, the UCP for the fractional Laplacian implies $\gamma^{1/2}u=0$ in $\R^n$. Hence, the uniform ellipticity of $\gamma$ ensures $u=0$ in $\R^n$. 

        Hence, the problem at hand satisfies the conditions in \cite[Theorem~2.6]{RZ2022unboundedFracCald} (see also \cite[Remark~4.2]{RZ2022unboundedFracCald}, Theorem~\ref{thm: well-posedness for fractional Schrödinger type equation} and Remark~\ref{remark: interior source problem}) and we obtain $q_1=q_2$ in $\Omega$.
    \end{proof}
    

    \subsection{Remarks on assumption \ref{integrability potentials} in Theorem~\ref{main thm}}
    \label{subsec: assump potent}

    Before proceeding in the next section to the construction of counterexamples to non-uniqueness, we discuss here the assumption \ref{integrability potentials} in Theorem~\ref{main thm}. More precisely, we answer here the following question:

    \begin{question}
    \label{question on assumption for potential}
        Do there exist conductivities $\gamma\colon\R^n\to \R$ such that the potentials $q_{\gamma}=-(-\Delta)^s m_{\gamma}/\gamma^{1/2}$ coming from the Liouville reduction satisfy the assumption \ref{integrability potentials} in Theorem~\ref{main thm}?
    \end{question}

    A simple example is the constant conductivity $\gamma=1$. In the next proposition we show that one can actually construct a whole class of conductivities via the obstacle problem for the fractional Laplacian, which has been studied in recent years by many authors (see e.g. \cite{SI-regularity-obstacle-problem,CSS2008OptimalReg,FernandezRosOtonCriticalDrift,JhaveriNeumayer2017HigherRegFracLap,CaffarelliRosOtonSerra2017Integro,AbatangeloRosOtonHigherRegularity} and the references therein).

    \begin{proposition}
    \label{prop: construction of potentials}
        Assume that $\varphi\in C_c^{1,1}(\R^n)$ is nonnegative and let $\rho\in C_c^{\infty}(\R^n)$ be a nonnegative mollifier. Then for any $0<s<n/4$ there exists a uniformly elliptic, smooth conductivity $\gamma\colon\R^n\to \R$ such that 
        \begin{enumerate}[(A)]
            \item\label{prop 1 constr} $m_{\gamma}\in H^{s,n/s}(\R^n)$,
            \item\label{prop 2 constr} $m_{\gamma}\geq \rho\ast\varphi\geq 0$ in $\R^n$,
            \item\label{prop 3 constr} $q_{\gamma}\in M(H^s\to H^{-s})\cap L^{\infty}(\R^n)$
            \item\label{prop 4 constr} and $q_{\gamma}\geq 0$ in $\R^n$.
        \end{enumerate}
    \end{proposition}
    \begin{proof}
        Consider the obstacle problem for the fractional Laplacian
    \begin{equation}
    \label{eq: FBP}
        \begin{array}{rl} 
    -(-\Delta)^s u &\!\!\!\geq 0 \ \text{ in } \R^n, \\
    (-\Delta)^s u &\!\!\!= 0 \ \text{ in } \{u>\varphi\}, \\
    u &\!\!\!\geq \varphi \ \text{ in } \R^n. \end{array} 
    \end{equation}
    Now, recall that by variational methods one can construct the unique solution $u\in \dot{H}^s(\R^n)$ to \eqref{eq: FBP} (see \cite[Section~3.1]{SI-regularity-obstacle-problem}). Moreover, by \cite[Corollary 3.7 and 3.9]{SI-regularity-obstacle-problem} the solution $u$ is bounded and Lipschitz continuous. Note that by the Sobolev embedding $\dot{H}^s(\R^n)\hookrightarrow L^{\frac{2n}{n-2s}}(\R^n)$, we have $u\in L^{\frac{2n}{n-2s}}(\R^n)$. Thus, using $u\in L^{\frac{2n}{n-2s}}(\R^n)\cap L^{\infty}(\R^n)$, $0<s<n/4$, the identity
    \[
        1+\frac{s}{n}=\frac{n+s}{n}=\frac{n-2s}{2n}+\frac{n+4s}{2n}
    \]
     and Young's inequality, we can conclude that 
    \begin{equation}
    \label{eq: regularity m}
        m\vcentcolon = \rho\ast u\in L^{n/s}(\R^n)\cap L^{\infty}(\R^n).
    \end{equation}
    Since the fractional Laplacian commutes with convolution and $\rho\in C_c^{\infty}(\R^n)$, we also get by the same argument 
    \begin{equation}
    \label{eq: regularity frac lap m}
        (-\Delta)^{s/2}m=(-\Delta)^{s/2}\rho\ast u\in L^{n/s}(\R^n).
    \end{equation}
    Again using that the fractional Laplacian commutes with convolution, $\varphi$ is nonnegative, $\rho$ is a nonnegative mollifier and $u\geq \varphi$ in $\R^n$, we have 
    \begin{enumerate}[(a)]
        \item\label{nonneg pot} $-(-\Delta)^s m\geq 0$ in $\R^n$
        \item and $m\geq \rho\ast \varphi\geq 0$.
    \end{enumerate}
    Taking into account \eqref{eq: regularity m}, we obtain that the function $\gamma\vcentcolon = (m+1)^2$ is uniformly elliptic and by definition $m=m_{\gamma}$. The smoothness of $\gamma$ comes from the fact that $\rho\in C_c^{\infty}(\R^n)$. Moreover, by \eqref{eq: regularity m} and \eqref{eq: regularity frac lap m}, we conclude that $m\in H^{s,n/s}(\R^n)$ and thus we can conclude that $m$ fulfills \ref{prop 1 constr} and \ref{prop 2 constr}. Furthermore, using $m\in H^{s,n/s}(\R^n)$ and the assertion \ref{potential frac Liouville reduction} of Lemma~\ref{Lemma: fractional Liouville reduction}, we deduce $q\vcentcolon = -(-\Delta)^sm/\gamma^{1/2}\in M(H^s\to H^{-s})$. This potential $q$ is by definition the potential $q_{\gamma}$ coming from the Liouville reduction. Finally, observe that $\rho\in C_c^{\infty}(\R^n)$ and Young's inequality ensures
    \[
        (-\Delta)^sm=(-\Delta)^s\rho\ast u\in L^{\infty}(\R^n).
    \]
    Thus, we see by this, the uniform ellipticity of $\gamma\in C^{\infty}(\R^n)$ and \ref{nonneg pot} that $q=-(-\Delta)^sm/\gamma^{1/2}\in L^{\infty}(\R^n)$ and $q\geq 0$. Hence, $q$ satisfies the conditions \ref{prop 3 constr}--\ref{prop 4 constr}.
    \end{proof}

    \subsection{Non-uniqueness}
    \label{sec: non-uniqueness}
    
    In this section, we construct counterexamples to uniqueness when the potentials are non-equal in the whole measurement set $W$ and hence prove Theorem~\ref{thm: non uniqueness}. Similarly, as in the articles \cite{counterexamples,RZ2022LowReg}, the construction of counterexamples relies on a PDE characterization of the equality of the DN maps. To derive such a correspondence between DN maps and a PDE for the coefficients, we need the following lemma:
    
    \begin{lemma}[Relation to fractional Schr\"odinger problem]
    \label{Auxiliary lemma}
        Let $\Omega\subset\R^n$ be an open set which is bounded in one direction, $W\subset\Omega_e$ an open set and $0<s<\min(1,n/2)$. Assume that $\gamma,\Gamma\in L^{\infty}(\R^n)$ with background deviations $m_{\gamma},m_{\Gamma}$ satisfy $\gamma(x),\Gamma(x)\geq \gamma_0>0$ and $m_{\gamma},m_{\Gamma}\in H^{s,n/s}(\R^n)$. Moreover, let $q\in M_{\gamma_0/\delta_0,+}(H^s\to H^{-s})$. If $\gamma|_{W}=\Gamma|_{W}$, then
    \begin{equation}
    \label{eq: identity DN maps}
        \langle \Lambda_{\gamma,q}f,g\rangle=\langle \Lambda_{Q_{\gamma,q}}(\Gamma^{1/2}f),(\Gamma^{1/2}g)\rangle
    \end{equation}
    holds for all $f,g\in\widetilde{H}^{s}(W)$, where the potential $Q_{\gamma,q}\in M(H^s\to H^{-s})$ is given by formula \eqref{eq: reduced potential}.
    \end{lemma}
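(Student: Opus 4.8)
The plan is to derive \eqref{eq: identity DN maps} in three steps: first an algebraic identity between the bilinear forms, then the passage to the Schrödinger DN map via the well-posedness results of Section~\ref{subsec: well-posedness and DN maps}, and only at the very end the use of the hypothesis $\gamma|_W=\Gamma|_W$.

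\emph{Step 1 (identity for the bilinear forms).} First I would record that
\[
B_{\gamma,q}(u,v)=B_{Q_{\gamma,q}}(\gamma^{1/2}u,\gamma^{1/2}v)\qquad\text{for all }u,v\in H^s(\R^n).
\]
Indeed, split $B_{\gamma,q}(u,v)=B_{\gamma}(u,v)+\langle qu,v\rangle$. The first term is exactly the fractional Liouville reduction: by Lemma~\ref{Lemma: fractional Liouville reduction}, \ref{liouville red identity} one has $B_{\gamma}(u,v)=B_{q_{\gamma}}(\gamma^{1/2}u,\gamma^{1/2}v)$ with $q_{\gamma}=-(-\Delta)^sm_{\gamma}/\gamma^{1/2}$. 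For the second term, substitute $u=\gamma^{-1/2}(\gamma^{1/2}u)$ and $v=\gamma^{-1/2}(\gamma^{1/2}v)$ (legitimate by Lemma~\ref{Lemma: fractional Liouville reduction}, \ref{estimate mathcal M}), exactly as in the computation \eqref{eq: continuity estimate potential} in the proof of Theorem~\ref{thm: well-posedness for fractional Schrödinger type equation}, to obtain $\langle qu,v\rangle=\langle (q/\gamma)\,\gamma^{1/2}u,\gamma^{1/2}v\rangle$. Adding the two contributions and recalling $Q_{\gamma,q}=q_{\gamma}+q/\gamma$ from \eqref{eq: reduced potential} gives the displayed identity.

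\emph{Step 2 (pass to the Schrödinger DN map).} Let $f,g\in\widetilde{H}^s(W)$. Since $W\subset\Omega_e$, the function $g$ is a representative of its class in $X$, so by the definition \eqref{eq: DN map opt tom eq} of $\Lambda_{\gamma,q}$ and Step~1,
\[
\langle\Lambda_{\gamma,q}f,g\rangle=B_{\gamma,q}(u_f,g)=B_{Q_{\gamma,q}}(\gamma^{1/2}u_f,\gamma^{1/2}g).
\]
By Theorem~\ref{thm: well-posedness for fractional Schrödinger type equation}, items \ref{item 2 well-posedness schrödinger}, \ref{item 4 well-posedness schrödinger} and \ref{item 5 well-posedness schrödinger}, the function $\gamma^{1/2}u_f\in H^s(\R^n)$ is precisely the unique solution $v_{\gamma^{1/2}f}$ of \eqref{eq: FSE well-posedness schrödinger} with exterior value $\gamma^{1/2}f$, and hence by \eqref{eq: well-defined DN map Schrödinger} the right-hand side equals $\langle\Lambda_{Q_{\gamma,q}}(\gamma^{1/2}f),\gamma^{1/2}g\rangle$.

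\emph{Step 3 (replace $\gamma^{1/2}$ by $\Gamma^{1/2}$) and main obstacle.} It remains to observe that $\gamma^{1/2}f=\Gamma^{1/2}f$ and $\gamma^{1/2}g=\Gamma^{1/2}g$ in $H^s(\R^n)$. For $\varphi\in C_c^{\infty}(W)$ this is immediate: $\gamma^{1/2}\varphi-\Gamma^{1/2}\varphi=(m_{\gamma}-m_{\Gamma})\varphi$ vanishes almost everywhere, because $\gamma|_W=\Gamma|_W$ forces $m_{\gamma}|_W=m_{\Gamma}|_W$ while $\supp\varphi\subset W$, and both products lie in $H^s(\R^n)$ by Lemma~\ref{Lemma: fractional Liouville reduction}, \ref{estimate mathcal M}. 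Since $\widetilde{H}^s(W)$ is the $H^s$-closure of $C_c^{\infty}(W)$ and multiplication by $m_{\gamma}$ and by $m_{\Gamma}$ is bounded on $H^s(\R^n)$, the equality extends to all of $\widetilde{H}^s(W)$; substituting into Step~2 yields \eqref{eq: identity DN maps}. The argument is essentially bookkeeping once the Liouville reduction is available; the only genuinely load-bearing facts are that every multiplication appearing here ($\gamma^{\pm1/2}$ acting on $H^s$ and on $\widetilde{H}^s(W)$, and $q/\gamma$ acting $H^s\to H^{-s}$) is well-defined and bounded — Lemma~\ref{Lemma: fractional Liouville reduction}, \ref{estimate mathcal M}--\ref{potential frac Liouville reduction} — and that the conductivity substitution turns solutions of the nonlocal optical tomography equation into solutions of the fractional Schrödinger equation with the transformed exterior data, which is Theorem~\ref{thm: well-posedness for fractional Schrödinger type equation}. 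The hypothesis $\gamma|_W=\Gamma|_W$ enters only in Step~3, where it is essential that $f$ and $g$ are supported in $W$.
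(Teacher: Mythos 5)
Your proposal is correct and follows essentially the same route as the paper: decompose $B_{\gamma,q}=B_{\gamma}+\langle q\cdot,\cdot\rangle$, apply the fractional Liouville reduction (Lemma~\ref{Lemma: fractional Liouville reduction}, \ref{liouville red identity}) to the first term and the substitution $\langle qu,v\rangle=\langle(q/\gamma)\gamma^{1/2}u,\gamma^{1/2}v\rangle$ to the second, identify $\gamma^{1/2}u_f$ as $v_{\gamma^{1/2}f}$ via Theorem~\ref{thm: well-posedness for fractional Schrödinger type equation}, and finally use $\gamma|_W=\Gamma|_W$ to swap $\gamma^{1/2}$ for $\Gamma^{1/2}$ on the exterior data. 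The only organizational difference is that the paper performs the substitution $\gamma^{1/2}f=\Gamma^{1/2}f$ before the bilinear-form computation while you defer it to Step~3; the mathematical content is identical.
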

    
    \begin{proof}
        First recall that if $u_f\in H^s(\R^n)$ is the unique solution to 
    \[
    \begin{split}
    L_{\gamma}^s u+qu&= 0\enspace\text{in}\enspace\Omega,\\
            u&= f\enspace\text{in}\enspace\Omega_e
    \end{split}
    \]
    with $f\in \widetilde{H}^s(W)$, then $\gamma^{1/2}u_f\in H^s(\R^n)$ is the unique solution to
    \[
        \begin{split}
            ((-\Delta)^s+Q_{\gamma,q})v&=0\quad\,\,\,\,\enspace\text{in}\enspace\Omega,\\
            v&=\gamma^{1/2}f\enspace\text{in}\enspace\Omega_e
        \end{split} 
    \] 
    (see~Theorem~\ref{thm: well-posedness for fractional Schrödinger type equation}, \ref{item 2 well-posedness schrödinger}). Since $\gamma|_W = \Gamma|_W$, we have $\gamma^{1/2}f=\Gamma^{1/2}f$ and therefore $\gamma^{1/2}u_f$ is the unique solution to
    \[
            \begin{split}
            ((-\Delta)^s+Q_{\gamma,q})v&=0\quad\,\,\,\,\enspace\text{in}\enspace\Omega,\\
            v&=\Gamma^{1/2}f\enspace\text{in}\enspace\Omega_e,
        \end{split}
    \] 
    which we denote by $v_{\Gamma^{1/2}f}$. Using the property \ref{liouville red identity} of Lemma~\ref{Lemma: fractional Liouville reduction} and the definition of $Q_{\gamma,q}$ via formula \eqref{eq: reduced potential}, we deduce
    \[
    \begin{split}
        \langle \Lambda_{\gamma,q}f,g\rangle&=B_{\gamma,q}(u_f,g)=B_{\gamma}(u_f,g)+\langle qu_f,g\rangle\\
        &=B_{q_{\gamma}}(\gamma^{1/2}u_f,\gamma^{1/2}g)+\left\langle \frac{q}{\gamma}(\gamma^{1/2}u_f),\gamma^{1/2}g\right\rangle\\
        &=B_{Q_{\gamma,q}}(\gamma^{1/2}u_f,\gamma^{1/2}g)=B_{Q_{\gamma,q}}(v_{\Gamma^{1/2}f},\Gamma^{1/2}g)\\
        &=\langle \Lambda_{Q_{\gamma,q}}(\Gamma^{1/2}f),(\Gamma^{1/2}g)\rangle
    \end{split}
    \]
    for all $f,g\in\widetilde{H}^s(W)$. In the last equality sign we used the definition of the DN map $\Lambda_{Q_{\gamma,q}}$ given in Theorem~\ref{thm: well-posedness for fractional Schrödinger type equation}, \ref{item 5 well-posedness schrödinger}.
    \end{proof}
    
    With this at hand, we can now give the proof of Theorem~\ref{thm: non uniqueness}:
    
    \begin{proof}[{Proof of Theorem~\ref{thm: non uniqueness}}]
        First assume that the coefficients $(\gamma_1,q_1)$ and $(\gamma_2,q)$ satisfy the regularity assumptions of Theorem~\ref{main thm}. Next, denote by $\Gamma\colon\R^n\to\R_+$ any function satisfying the following conditions
        \begin{enumerate}[(a)]
            \item $\Gamma\in L^{\infty}(\R^n)$,
            \item $\Gamma\geq \gamma_0$,
            \item $\Gamma|_W=\gamma_1|_W=\gamma_2|_W$
            \item and $m_{\Gamma}=\Gamma^{1/2}-1\in H^{s,n/s}(\R^n)$.
        \end{enumerate}
        By Lemma~\ref{Auxiliary lemma}, Theorem~\ref{thm: well-posedness for fractional Schrödinger type equation} and Theorem~\ref{thm: exterior reconstruction}, one sees that $\Lambda_{\gamma_1,q_1}f|_W=\Lambda_{\gamma_2,q_2}f|_W$ for all $f\in C_c^{\infty}(W)$ is equivalent to $\Lambda_{Q_{\gamma_1,q_1}}f|_W=\Lambda_{Q_{\gamma_2,q_2}}f|_W$ for all $f\in C_c^{\infty}(W)$ and $\gamma_1|_W=\gamma_2|_W$. Next, we claim this is equivalent to the following two assertions:
        \begin{enumerate}[(i)]
            \item $\gamma_1=\gamma_2$ in $W$,
            \item\label{item 1 equal potentials} $Q_{\gamma_1,q_1}=Q_{\gamma_2,q_2}$ in $\Omega$
            \item\label{item 2 equality in exterior set} and 
        \begin{equation}
        \label{eq: equivalence measurement set}
            (-\Delta)^sm+\frac{q_2-q_1}{\gamma_2^{1/2}}=0\enspace\text{in}\enspace W,
        \end{equation}
        where $m=m_{\gamma_1}-m_{\gamma_2}$.
        \end{enumerate}
        If $\Lambda_{Q_{\gamma_1,q_1}}f|_W=\Lambda_{Q_{\gamma_2,q_2}}f|_W$ for all $f\in C_c^{\infty}(W)$, then \cite[Theorem~2.6, Corollary~2.7]{RZ2022unboundedFracCald} ensure that $Q_{\gamma_1,q_1}=Q_{\gamma_2,q_2}$ in $\Omega$ and $W$. Next note that
        \begin{equation}
        \label{eq: some calculation}
            \begin{split}
                0&=Q_{\gamma_1,q_1}-Q_{\gamma_2,q_2}=-\frac{(-\Delta)^sm_{\gamma_1}}{\gamma_1^{1/2}}+\frac{(-\Delta)^sm_{\gamma_2}}{\gamma_2^{1/2}}+\frac{q_1}{\gamma_1}-\frac{q_2}{\gamma_2}\\
                &=-\frac{(-\Delta)^sm}{\gamma_1^{1/2}}+\left(\frac{1}{\gamma_2^{1/2}}-\frac{1}{\gamma_1^{1/2}}\right)(-\Delta)^sm_{\gamma_2}+\frac{q_1}{\gamma_1}-\frac{q_2}{\gamma_2}\\
                &=-\frac{(-\Delta)^sm}{\gamma_1^{1/2}}+\frac{m}{\gamma_1^{1/2}\gamma_2^{1/2}}(-\Delta)^sm_{\gamma_2}+\frac{q_1}{\gamma_1}-\frac{q_2}{\gamma_2},
            \end{split}
        \end{equation}
        where set $m=m_{\gamma_1}-m_{\gamma_2}$. As $\gamma_1=\gamma_2$ in $W$, the identity \eqref{eq: some calculation} reduces to the one in statement \ref{item 2 equality in exterior set}.
        Next, assume the converse namely that $\gamma_1=\gamma_2$ in $W$ and $m=m_{\gamma_1}-m_{\gamma_2}$ as well as  $Q_{\gamma_j,q_j}$ for $j=1,2$ satisfy \ref{item 1 equal potentials} and \ref{item 2 equality in exterior set}. Then for any given Dirichlet value $f\in C_c^{\infty}(W)$, the Dirichlet problems for $(-\Delta)^sv+Q_{\gamma_1,q_1}v=0$ in $\Omega$ and $(-\Delta)^sv+Q_{\gamma_2,q_2}v=0$ in $\Omega$ have the same solution $v_f^{(1)}=v_f^{(2)}$. Hence, one has \[
        \begin{split}
            B_{Q_{\gamma_1,q_1}}(v_f^{(1)},g)&=\langle (-\Delta)^{s/2}v_f^{(1)},(-\Delta)^{s/2}g\rangle+\langle Q_{\gamma_1,q_1}v_f^{(1)},g\rangle\\
            &=\langle (-\Delta)^{s/2}v_f^{(2)},(-\Delta)^{s/2}g\rangle+\langle Q_{\gamma_1,q_1}f,g\rangle\\
            &=\langle (-\Delta)^{s/2}v_f^{(2)},(-\Delta)^{s/2}g\rangle+\langle Q_{\gamma_2,q_2}f,g\rangle\\
            &=\langle (-\Delta)^{s/2}v_f^{(2)},(-\Delta)^{s/2}g\rangle+\langle Q_{\gamma_2,q_2}v_f^{(2)},g\rangle
        \end{split}
        \]
        for any $g\in C_c^{\infty}(W)$, but this is nothing else than $\Lambda_{Q_{\gamma_1,q_1}}f|_W=\Lambda_{Q_{\gamma_1,q_1}}f|_W$.

    Next, choose $\gamma_2=1$ and $q_2=0$ and assume that $(\gamma_1,q_1)$ satisfies the assumptions of Theorem~\ref{main thm}. This implies that there holds $\Lambda_{\gamma_1,q_1}f|_W=\Lambda_{1,0}f|_W$ for all $f\in C_c^{\infty}(W)$ if and only if we have
    \begin{enumerate}[(I)]
            \item\label{item 2 measurement set 2} $\gamma_1=1$ on $W$,
            \item\label{item 1 equal potentials 2} $Q_{\gamma_1,q_1}=0$ in $\Omega$
            \item\label{item 3 equality in exterior set} and $(-\Delta)^sm_{\gamma_1}=q_1\enspace\text{in}\enspace W$.
    \end{enumerate}
    Therefore, if we define $q_1$ via 
    \begin{equation}
    \label{eq: specification of potential}
        q_1=\gamma_1^{1/2}(-\Delta)^sm_{\gamma_1}\enspace\text{in}\enspace\R^n
    \end{equation}
    for a given sufficiently regular function $\gamma_1\colon\R^n\to\R_+$ with $\gamma_1|_W=1$, then the conditions \ref{item 2 measurement set 2}, \ref{item 1 equal potentials 2} and \ref{item 3 equality in exterior set} are satisfied. Hence, the remaining task is to select $\gamma_1$ in such a way that the required regularity properties of Theorem~\ref{main thm} are met. We construct $m_{\gamma_1}\in H^{s,n/s}(\R^n)\cap H^s(\R^n)$ as follows: First, choose open sets $\Omega',\omega\subset\R^n$ satisfying $\Omega'\Subset\Omega$ and $\omega\Subset \Omega\setminus\overline{\Omega'}$. Next, let us fix some $\epsilon>0$ such that $\Omega'_{5\epsilon}, \omega_{5\epsilon}, \Omega_e$ are disjoint. Here and in the rest of the proof, we denote by $A_{\delta}$ the open $\delta$-neighbor-hood of the set $A\subset\R^n$. 
    Now, choose any nonnegative cut-off function $\eta\in C_c^{\infty}(\omega_{3\epsilon})$ satisfying $\eta|_{\omega}=1$. We define $\widetilde{m}\in H^s(\R^n)$ as the unique solution to
    \begin{equation}
    \label{eq: PDE in extended domain}
        (-\Delta)^s\widetilde{m}=0\quad \text{in}\quad \Omega'_{2\epsilon},\quad \widetilde{m}=\eta\quad\text{in}\quad \R^n\setminus\overline{\Omega'}_{2\epsilon}.
    \end{equation}
    Since $\eta\geq 0$, the maximum principle for the fractional Laplacian shows $\widetilde{m}\geq 0$ (cf.~\cite[Proposition~4.1]{RosOton16-NonlocEllipticSurvey}). Proceeding as in \cite[Proof of Theorem 1.6]{RZ2022LowReg} one can show that
    \[
        m_{\gamma_1}\vcentcolon =C_{\epsilon}\rho_{\epsilon}\ast\widetilde{m}\in H^s(\R^n)\quad\text{with}\quad C_{\epsilon}\vcentcolon=\frac{\epsilon^{n/2}}{2|B_1|^{1/2}\|\rho\|_{L^{\infty}(\R^n)}^{1/2}\|\widetilde{m}\|_{L^2(\R^n)}},
    \]
    where $\rho_{\epsilon}\in C_c^{\infty}(\R^n)$ is the standard mollifier of width $\epsilon$, solves
    \[
        (-\Delta)^sm=0\quad \text{in}\quad \Omega',\quad m=m_{\gamma_1}\quad\text{in}\quad \Omega'_e.
    \]
    Furthermore, $m_{\gamma_1}$ has the following properties
\begin{enumerate}[(A)]
    \item\label{item 2 m} $m_{\gamma_1}\in L^{\infty}(\R^n)$ with $\|m_{\gamma_1}\|_{L^{\infty}(\R^n)}\leq 1/2$ and $m_{\gamma_1}\geq 0$,
    \item\label{item 3 m} $m_{\gamma_1}\in H^{s}(\R^n)\cap H^{s,n/s}(\R^n)$
    \item\label{item 4 m} and $\supp(m_{\gamma_1})\subset \Omega_e$.
    \end{enumerate}
    Now, we define $\gamma_1\in L^{\infty}(\R^n)$ via $\gamma_1=(m_{\gamma_1}+1)^2\geq 1$. Therefore, $\gamma_1$ satisfies all required properties and even belongs to $C^{\infty}_b(\R^n)$, since $m_{\gamma_1}$ is defined via mollification of a $L^2$ function. Using a similar calculation as for Lemma~ \ref{Lemma: fractional Liouville reduction}, \ref{potential frac Liouville reduction}, we have $q_1\in M(H^s\to H^{-s})$ and by scaling of $m_{\gamma_1}$ we can make the norm $\|q_1\|_s$ as small as we want. In particular, this allows to guarantee $q_1\in M_{\gamma_0/\delta_0}(H^s\to H^{-s})$ with $\gamma_0=1$. Note that we cannot have $q_1|_W=0$ as then the UCP implies $m_{\gamma_1}=0$. Hence, we can conclude the proof.
    \end{proof}

    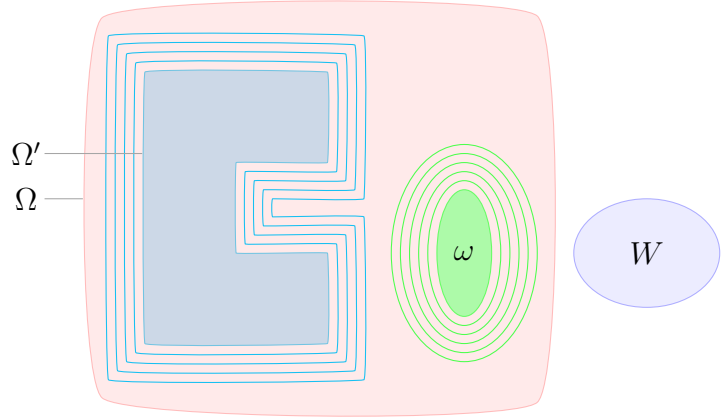
\begin{figure}[!ht]
    \centering
    \begin{tikzpicture}[scale=1.2]
    \draw[color=cyan!70, fill=cyan!30] plot [smooth cycle, tension=0.05] coordinates {(0,0) (0,3) (2,3) (2,2)  (1,2) (1,1) (2,1) (2,0)};
    \filldraw[color=red!60, fill=red!20, opacity = 0.4] plot [smooth cycle, tension=0.2] coordinates {(-0.47,-0.6) (-0.47,3.6) (4.3,3.62) (4.3,-0.65) };
    \draw[color=cyan!70] plot [smooth cycle, tension=0.05] coordinates { (-0.1,-0.1) (-0.1,3.1) (2.1,3.1) (2.1,1.9) (1.1,1.9) (1.1,1.1) (2.1,1.1) (2.1,-0.1)};
    \draw[color=cyan!70] plot [smooth cycle, tension=0.05] coordinates {  (-0.2,-0.2)  (-0.2,3.2)  (2.2,3.2) (2.2,1.8)  (1.2,1.8) (1.2,1.2) (2.2,1.2) (2.2,-0.2)};
    \draw[color=cyan!70] plot [smooth cycle, tension=0.05] coordinates {   (-0.3,-0.3) (-0.3,3.3) (2.3,3.3) (2.3,1.7) (1.3,1.7) (1.3,1.3)  (2.3,1.3) (2.3,-0.3)};
    \draw[color=cyan!70] plot [smooth cycle, tension=0.05] coordinates {   (-0.4,-0.4) (-0.4,3.4) (2.4,3.4) (2.4,1.6) (1.4,1.6) (1.4,1.4) (2.4,1.4) (2.4,-0.4)};
    \draw[color=gray!70] (-0.03,2.1)--(-1.1,2.1);
    \node at (-1.3,2.1) {$\raisebox{-.35\baselineskip}{\ensuremath{\Omega'}}$};
  \draw[color=gray!70] (-0.68,1.6)--(-1.1,1.6);
    \node at (-1.3,1.6) {$\raisebox{-.35\baselineskip}{\ensuremath{\Omega}}$};
    \filldraw[color=blue!60, fill=blue!15, opacity=0.5] (5.5,1) ellipse (0.8 and 0.6);
    \node at (5.5,1) {$\raisebox{-.35\baselineskip}{\ensuremath{W}}$};
    \filldraw[color=green!70, fill=green!40, opacity=0.8] (3.5,1) ellipse (0.3 and 0.7);
    \draw[color=green!70] (3.5,1) ellipse (0.4 and 0.8);
    \draw[color=green!70] (3.5,1) ellipse (0.5 and 0.9);
    \draw[color=green!70] (3.5,1) ellipse (0.6 and 1);
    \draw[color=green!70] (3.5,1) ellipse (0.7 and 1.1);
    \draw[color=green!70] (3.5,1) ellipse (0.8 and 1.2);
     \node at (3.5,1) {$\raisebox{-.35\baselineskip}{\ensuremath{\omega}}$};
\end{tikzpicture}\label{fig: Geometric setting 2}
    \caption{A graphical illustration of the sets used in the proof of Theorem~\ref{thm: non uniqueness}. }
\end{figure}

\medskip 
	
	\noindent\textbf{Acknowledgment.} The author thanks the anonymous referees and the editor for the careful reading and the helpful comments. 

\newpage

\bibliographystyle{alpha}

\end{document}